\DeclareMathAlphabet{\mathpzc}{OT1}{pzc}{m}{it}
\newtheorem{te}{Theorem}[section]
\newtheorem{defin}[te]{Definition}
\newtheorem{os}[te]{Remark}
\newtheorem{prop}[te]{Proposition}
\newtheorem{ex}[te]{Example}
\numberwithin{equation}{section}
\def \l { \left( }
\def \r {\right) }
\def \ll { \left\lbrace }
\def \rr { \right\rbrace }
\begin{document}

	\title[]{Semi-Markov models and
	 motion in heterogeneous media}
	\address{$1$: Dipartimento di Scienze Statistiche, Sapienza - Universit\`a di Roma}
	\address{$2$: Dipartimento di Matematica e applicazioni ``Renato Caccioppoli" - Universit\`a degli studi di Napoli ``Federico II"}
	\keywords{Semi-Markov processes, anomalous diffusion, continuous time random walks, Volterra equations, fractional derivatives, subordinators}
	\date{\today}
	\subjclass[2010]{60K15, 60K40, 60G22}

\author{Costantino Ricciuti$^1$}
\author{Bruno Toaldo$^2$}
		\begin{abstract}
In this paper we study continuous time random walks (CTRWs) such that the holding time in each state has a distribution depending on the state itself. For such processes, we provide integro-differential (backward and forward) equations of Volterra type, exhibiting a position dependent convolution kernel. Particular attention is devoted to the case where the holding times have a power-law decaying density, whose exponent depends on the state itself, which leads to variable order fractional equations. A suitable limit yields a variable order fractional heat equation, which models anomalous diffusions in heterogeneous media. 
\end{abstract}

	\maketitle
\tableofcontents


\section{Introduction}
We here consider continuous time random walks (CTRWs) on countable state spaces. It is assumed that every time the walker jumps, the future trajectory becomes independent of its past, namely the next position and the next jump time  
depend only on the current position; furthermore, in a generic time instant, the future behavior is assumed to be also depending  on the time already spent in the current position. Such a process is said to be semi-Markovian.
If the waiting times between jumps follow an exponential distribution, then, due to the lack of memory property, the random walk is a Markov process.

It is well known that suitable (Markovian) random walks are good approximations of the Brownian motion. In the last decades it has been noticed that the CTRWs whose waiting times  have distribution  with a power-law decay, played a central role in statistical physics because they are good approximations of anomalous diffusion processes, where the mean square displacement grows as $\overline{x^2} \sim t^{\alpha}, \alpha \in (0,1)$, and therefore slower than a standard Brownian motion (for a complete overview on this matter consult \cite{Metzler} and references therein). In these models each site exercises a trapping effect which, in some sense, delayes the time with respect to a corresponding Markov process.

It turns out that these facts can be framed in a nice probabilistic setting: to construct a large class of CTRWs, it is sufficient to replace the deterministic time $t$ of a Markov process by an independent inverse stable subordinator (on this point see, for example, the instructive discussion in \cite{Meerschaert1}).
It is well known (see for example  \cite[page 365]{Kolokoltsov} and \cite{Meerschaert3}) that the transition probabilities of the correponding CTRW follows both the fractional backward and forward equations. Such equations are obtained from Kolmogorov backward and forward equations by replacing the time derivative with the fractional one, which introduces a memory effect by a convolution integral with a slowly decaying power-law kernel.

A suitable scaling then leads to anomalous diffusion processes, whose p.d.f. solves the  Fokker-Planck equation (see, e.g., \cite{Metzler})
\begin{align}
\frac{\partial}{\partial t} p(x,y,t)= k \mathcal{D}_t^{1-\alpha}\frac{\partial^2}{\partial x^2} p(x,y,t).
\end{align}
It has been empirically confirmed (see \cite{Metzler}) that these models are particularly effective in a number of applications, e.g., for modeling diffusion in percolative and porous systems, charge carrier transport in amorphous semiconductors, nuclear magnetic resonance, motion on fractal
geometries, dynamics of a bead in a polymeric network, protein conformational
dynamics and many others. 


We finally stress that many aspects of the theory hold as they are if the distribution of the holding times is arbitrary and not necessarily with a power law decay provided that it satisfies some mild assumptions (see, for example, the discussion in \cite[Section 4]{Meerschaert1}). In this case the random time process is given by the inverse of a generic subordinator and the corresponding backward equations have the form of a Volterra integro-differential equation
\begin{align}
\frac{d}{dt} \int_0^t p(x,y,s) \, k(t-s) \, ds \, - \, k(t) p(x,y,0) \, = \, \sum_{z} g_{x,z} p(z,y,t),
\label{voltintro}
\end{align}
where $g_{x,y}:=(G)_{x,y}$ and $G$ is the Markovian generator (see \cite{Zhen Qing Chen, Meerschaert3, toaldo, toaldodo} for the general theory and \cite{dovetalip, dovetal} for some particular cases).
The fractional case is more familiar in statistical physics because it is widely used in applications.

Up to now, we have only considered 
the simplest forms of fractional kinetic equations, where the fractional index $\alpha$ is constant. On the other hand, it is clear that further theoretical investigations are required  for the description of more complicated (and more realistic) random processes, where the particle moves in an inhomogeneous environment; as we will discuss in the paper it turns out that this leads to equations of multi-fractional type. Equations with time-fractional derivative whose order depends on space have been studied in \cite{Orsingher2}, where the authors considered a CTRW, say $X(t)$, $t \geq 0$, such that  the function $ f(x,t)=\mathds{E}\{u(X(t))\mid X(0)=x \}$, for  a suitable test function $u$, solves the fractional backward equation
\begin{align}
\mathcal{D}_t^{\alpha (x)}f(x,t)=Gf(x,t) \label{intro1}
\end{align}
where $\mathcal{D}_t^{\alpha (x)}$ denotes the $\alpha$-fractional derivative in the sense of Caputo-Dzerbayshan, i.e., for $\alpha \in (0,1)$,
\begin{align}
\mathcal{D}_t^{\alpha } u(t) \, : = \, \frac{1}{\Gamma(1-\alpha)} \frac{d}{dt} \int_0^t   u(s) \, (t-s)^{-\alpha} \, ds \, - \, \frac{t^{-\alpha}u(0)}{\Gamma(1-\alpha)},
\label{defcaputo}
\end{align}
for any function $u$ such that the above integral is differentiable. 
In such a model, the trapping effect is not exercised with the same intensity at all sites.
Indeed, when the particle reaches the state $x$, it is trapped for a time interval with density
 $\psi (t)\sim t^{-1-\alpha (x)}$ before jumping to another point.
Thus the time  delay is stronger when the particle is located at points with small values of $\alpha$.
This leads to the fundamental fact regarding the time-change relation $X(t)=M(L(t))$: the time process $L$  and the Markov process $M$ are not independent. Such a construction is far from trivial, since $L$ is the right continuous inverse of a non-decreasing additive process also called time-inhomogeneous subordinator (for basic information consult \cite{Sato} and \cite{Orsingher1}).

In the case of a countable state space $\mathcal{S}$, we here present the derivation of the backward equation 
\begin{align}
\mathcal{D}_t^{\alpha (x)}p(x,y,t)= \sum _z g_{x,z}p(z,y,t).
\end{align}
We further introduce the forward equation
\begin{align}
\frac{d}{dt} p(x,y,t)= \sum _zg_{z,y}\, ^R \mathcal{D}_t^{1-\alpha (z)} p(x,z,t)
\label{forwgiusta}
\end{align}
where $^R\mathcal{D}_t^{1-\alpha (x)}$ denotes the fractional derivative in the sense of Riemann-Liouville, i.e., for $\beta \in (0,1),$
\begin{align}
^R\mathcal{D}_t^\beta u(t) : = \frac{1}{\Gamma(1-\beta)} \frac{d}{dt} \int_0^t u(s) \, (t-s)^{-\beta} ds
\label{defriemann}
\end{align}
for any function $u$ such that the above operator is well defined.
Further we explain why eq. \eqref{forwgiusta} is a true forward equation in the classical sense  of Kolmogorov.

Therefore, this paper also creates a further bridge between the theory of semi-Markov processes and models of motion in heterogeneous media (a different theory concerning motions at finite velocity is discussed in \cite{koro}): on the one hand, there is the theory of semi-Markov processes, on the other hand, there are recent works  concerning the fractional diffusion equation with multifractional index:
\begin{align}
\frac{\partial}{\partial t}p(x,y,t)=\frac{1}{2} \frac{\partial ^2}{\partial x^2}\l k(x) \mathcal{D}_t^{1-\alpha (x)}p(x,y,t) \r.
\end{align}
Such equation has been derived in \cite{Gorenflo},
but the related theory  is still at an early stage, especially with regard to physical and phenomenological aspects. However, the use of a multi-fractional index $\alpha (x)$ is more realistic in the description of physical phenomena. Indeed, it takes into account the possibility of heterogeneous media, or, more simply, it considers homogeneous media where some impurities are scattered.

Finally, in the same spirit as \eqref{voltintro} we show that previous models can be generalized by letting the (random) trapping effects having an arbitrary density, subject to some mild assumptions. These models then yield integro-differential equations of Volterra type, with a position dependent kernel of convolution, i.e.,
\begin{align}
\frac{d}{dt} p(x,y,t)= \sum _zg_{z,y}\, \frac{d}{dt} \int_0^t p(x,z,s) \, k(t-s,z) \, ds.
\end{align}

The plan of the paper is the following.  In sections \ref{2}, \ref{3}, \ref{4} and \ref{5}, we consider the case of power-law holding times, which is the most familiar case in statistical physics. In particular, in section 2 we review (in our notations) some known facts on CTRWs in a homogeneous environment, where the fractional index $\alpha$ is assumed to be constant in space.  Section 3 and 4 regard CTRWs in heterogeneous environment, where the fractional index is assumed to be space-dependent. Section 5 deals with the derivation of the multifractional diffusion equation. In section 6 many results are extended to the case where the holding times follow more general distributions.

\section{Semi-Markov models for motion in homogeneous media}
\label{2}
Before moving to heterogenous media we collect some results from the literature concerning classical models which will be used in the subsequent parts. As we stated in the introduction, the most popular model in statistical physics is related with holding times in each site having a density $\psi(t) \sim Ct^{-\alpha-1}$, $C>0$, $\alpha \in (0,1)$, with a power law decay. So, for example $\psi (t) = -(d/dt) E_{\alpha}(-\lambda t^{\alpha})$ (compare with \cite[eq. (26)]{Scalas2006Lecture}), and this is related to fractional processes. Hence we focus the attention on this case to present the results concerning this theory.

First, in order to introduce the notation that we will use hereafter, we recall some basic facts regarding the classical theory of stepped Markov processes.
Let us consider  a continuous time  Markov process $M$ with discrete state space $\mathcal{S}$ 
\begin{align} \label{definizione processo markov}
M(t)=X_n \qquad  V_n  \leq t< V_{n+1}\qquad \text{ where } V_0=0 \qquad V_n= \sum _{k=0}^{n-1} E_k
\end{align}
where $X_n$ is a homogeneous discrete-time Markov chain on $\mathcal{S}$ with transition probabilities
\begin{align}
h_{i,j}=P(X_{n+1}=j|X_n=i), \qquad \forall n \in \mathbb{N} \qquad i,j \in \mathcal{S},
\end{align} 
and the sojourn times are such that
\begin{align}
P(E_n>t|X_n=i)=e^{-\lambda_i t} \qquad \forall n\in \mathbb{N}, \qquad t\geq 0.
\end{align}
Let 
\begin{align}
p_{i,j}(t)= P(M(t)=j|M(0)=i)
\end{align}
be the transition probabilities. 
The Markovian generator of $M$ is the matrix with elements
\begin{align}
g_{i,j}= \lambda _i (h_{i,j}-\delta _{i,j})
\end{align}
where $\delta _{i,j}$ denotes the Kronecker symbol.
Then the infinitesimal transition probabilities have the form
\begin{align}
p_{i,j}(dt)=\begin{cases} g_{i,j} dt = \lambda _i h_{i,j}dt,  &i\neq j,\\
1+g_{i,i} dt= 1-\lambda _i dt +\lambda _i h_{i,i}dt, \qquad & i=j. \end{cases}
\end{align}
It is enough for our models to consider the case in which, a.s., 
\begin{align}
\zeta := \sup_n V_n = \sum_n E_n = \infty, 
\end{align}
so that the processes here are non explosive and hence we shall not consider  what happens to a process after explosion. Under all these assumptions the functions $p_{i,j}(t)$, with  $i,j \in S$ solve the Kolmogorov backward equations (e.g. \cite[Sec. 2.8]{norris})
\begin{align} \label{backward markoviana}
\frac{d}{dt} p_{i,j}(t)= \sum _k g_{i,k}p_{k,j}(t), \qquad p_{i,j}(0)= \delta _{i,j},
\end{align}
as well as the Kolmogorov forward equations
\begin{align} \label{forward markoviana}
\frac{d}{dt} p_{i,j}(t)= \sum _k p_{i,k}(t) g_{k,j}, \qquad p_{i,j}(0)= \delta _{i,j},
\end{align}
which can be written in compact matrix notation as
\begin{align}
\frac{d}{dt}P(t)= GP(t)=P(t)G, \qquad P(0)=I. 
\end{align}

We now consider a CTRW constructed in the same way of $M$, except for the distribution of the waiting times, which are no longer exponentially distributed. These processes are said to be semi-Markov processes in the sense of Gihman and Skorohod \cite[Chapter 3]{gihman}. Hence let $X(t)$ be
\begin{align}
X(t)=X_n ,\qquad  T_n  \leq t< T_{n+1},\qquad \text{where } T_0=0  \qquad T_n= \sum _{k=0}^{n-1} J_k,\label{processo principale}
\end{align} 
where $X_n$ is a homogeneous discrete time Markov chain on $\mathcal{S}$ with transition probabilities
\begin{align}
h_{i,j}=P(X_{n+1}=j|X_n=i) ,\qquad \forall n \in \mathbb{N}, \qquad i,j \in \mathcal{S},
\end{align} 
and the sojourn times are such that
\begin{align}
P(J_n>t|X_n=i)=\overline{F}_i(t), \qquad \forall n\in \mathbb{N} ,\qquad t\geq 0,
\end{align}
where $F_i(t) = 1-\overline{F}_i(t)$ is an arbirtrary c.d.f. We will devote particular attention to the case
\begin{align}
P(J_n>t|X_n=i)=E_\alpha(-\lambda _i t^{\alpha }), \qquad \forall n\in \mathbb{N} ,\qquad t\geq 0,
\end{align}
for $\lambda_i>0$, where 
\begin{align*}
E_\alpha(x):= \sum_{k=0}^\infty \frac{x^k}{\Gamma (1+\alpha k)}
\end{align*}
is the Mittag-Leffler function. In this case (e.g. \cite[eq. (26)]{Scalas2006Lecture})
\begin{align}
E_{\alpha}(- \lambda t^{\alpha }) \sim C_\lambda \frac{t^{-\alpha}}{\Gamma(1-\alpha)}, \qquad C_\lambda >0,
\end{align}
and the corresponding equations are fractional. The charactering property of semi-Markov processes is the following: by defining
\begin{align*}
\gamma(s) = s-\sup \ll w \leq s : X(w) \neq X(s) \rr ,
 \end{align*}
the sojourn time of $X$ in the current position,  the couple $(X(t), \gamma (t))$  is a (strict) Markov process \cite[Chapter 3, Section 3, Lemma 2]{gihman}. This is to say that, when conditioning on the trajectory up to time $s$, future events depend not only on the current position $X(s)$, as it is for Markov processes, but also on the quantity $\gamma(s)$.  
Let 
\begin{align} \label{bbbb}
p_{i,j}(t)&:=P(X(t)=j|X(0)=i, \gamma(0) =0) \notag \\& =P(X(t+\tau)=j|X(\tau)=i, \gamma(\tau) =0) 
\end{align}
be the transition probabilities (the second equality follows by time-homogeneity). 
We know from \cite[page 20]{koro} that the transition probabilities solve the renewal equation
\begin{align} \label{Markov renewal equation}
p_{i,j}(t)= P \{ J_i>t\}\delta _{i,j}+ \int _0^t \sum _l h_{i,l}\, p_{l,j}(t-s)\mathpzc{f}_i(ds) 
\end{align}
where here $\mathpzc{f}_i(t)$ denotes a smooth density of $F_i(t)$.
Note that (\ref{Markov renewal equation}), which provides a system of integral equations for the transition probabilities \eqref{bbbb}, comes from a very classical conditioning argument: 
fixing the time of the first jump $J_0$ and using the Markov property of the semi-Markov process at the jump times  yields  (see \cite[page 19]{koro} for some details)
\begin{align}
P \l X(t) = j \mid X(0) = i, \gamma(0)=0 \r \, = \, &P \l X(t) = j, J_0 >t \mid X(0) = i , \gamma(0)=0 \r \notag \\ &+ P \l X(t) = j , J_0 \leq t \mid X(0) = i, \gamma(0)=0 \r.
\end{align}
A similar approach on semi-Markov processes, with an interesting discussion on exactly solvable models, can be found in \cite{Scalas}.

The process \eqref{processo principale} is known to have a deep connection to fractional calculus. Indeed, the following result holds.
\begin{prop}
\label{teofrachom}
 The transition functions $p_{i,j}(t)$, $i,j \in \mathcal{S}$,  defined in \eqref{bbbb}
solve the following system of backward equations
\begin{align} \label{aaaa}
\mathcal{D}_t^{\alpha } \, p_{i,j}(t)&= \sum _k g_{i,k}p_{k,j}(t) , \qquad p_{i,j}(0)= \delta _{i,j},
\end{align}
as well as the system of ``forward" equations
\begin{align} \label{zzz}
\mathcal{D}_t^{\alpha } \, p_{i,j}(t)&= \sum _k p_{i,k}(t) g_{k,j}, \qquad p_{i,j}(0)= \delta _{i,j} .
\end{align}
\end{prop}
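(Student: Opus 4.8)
The plan is to work in the Laplace domain, taking the renewal equation \eqref{Markov renewal equation} as the starting point. First I would record the two transforms that drive the whole computation. Since $P\{J_i>t\}=E_\alpha(-\lambda_i t^\alpha)$ and $\mathpzc{f}_i$ is its density, a direct computation (using $E_\alpha(0)=1$) gives $\int_0^\infty e^{-st}E_\alpha(-\lambda_i t^\alpha)\,dt = s^{\alpha-1}/(s^\alpha+\lambda_i)$ and $\widetilde{\mathpzc{f}}_i(s)=\lambda_i/(s^\alpha+\lambda_i)$, where the tilde denotes the Laplace transform in $t$.

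Next I would Laplace-transform \eqref{Markov renewal equation}. Writing $\widetilde{p}_{i,j}(s)$ for the transform and recognizing the last term as a convolution, this yields
\[
\widetilde{p}_{i,j}(s)=\frac{s^{\alpha-1}}{s^\alpha+\lambda_i}\,\delta_{i,j}+\frac{\lambda_i}{s^\alpha+\lambda_i}\sum_l h_{i,l}\,\widetilde{p}_{l,j}(s).
\]
Multiplying across by $(s^\alpha+\lambda_i)$ and collecting the terms carrying $\widetilde{p}_{l,j}(s)$, I would invoke $g_{i,l}=\lambda_i(h_{i,l}-\delta_{i,l})$ to rewrite the right-hand side and obtain
\[
s^\alpha\widetilde{p}_{i,j}(s)-s^{\alpha-1}\delta_{i,j}=\sum_l g_{i,l}\,\widetilde{p}_{l,j}(s).
\]
Recalling that the Caputo derivative \eqref{defcaputo} has Laplace transform $\int_0^\infty e^{-st}\mathcal{D}_t^\alpha u(t)\,dt = s^\alpha\widetilde{u}(s)-s^{\alpha-1}u(0)$, together with $p_{i,j}(0)=\delta_{i,j}$, the left-hand side is exactly the transform of $\mathcal{D}_t^\alpha p_{i,j}(t)$. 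Inverting, by uniqueness of the Laplace transform, gives the backward system \eqref{aaaa}.

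For the forward system \eqref{zzz} I would pass to matrix notation. The identity just obtained reads $s^\alpha\widetilde{P}(s)-s^{\alpha-1}I=G\widetilde{P}(s)$, whence the resolvent representation $\widetilde{P}(s)=s^{\alpha-1}(s^\alpha I-G)^{-1}$. Since $G$ commutes with its own resolvent, $\widetilde{P}(s)G=G\widetilde{P}(s)=s^\alpha\widetilde{P}(s)-s^{\alpha-1}I$; reading the outer equality componentwise and inverting the transform yields $\mathcal{D}_t^\alpha p_{i,j}(t)=\sum_k p_{i,k}(t)g_{k,j}$, i.e. \eqref{zzz}. Equivalently, one may use the time-change representation $X(t)=M(L(t))$ with $L$ the inverse $\alpha$-stable subordinator independent of the Markov process $M$: since the Markov semigroup $P^M$ satisfies both Kolmogorov equations \eqref{backward markoviana} and \eqref{forward markoviana}, subordinating through the density identity $\int_0^\infty e^{-st}f_{L(t)}(\tau)\,dt=s^{\alpha-1}e^{-\tau s^\alpha}$ reproduces both fractional equations simultaneously.

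The main obstacle is not the algebra but its rigorous justification on a countable, possibly infinite, state space: one must secure convergence of the sums $\sum_l h_{i,l}\widetilde{p}_{l,j}$ and $\sum_k p_{i,k}g_{k,j}$, license the interchange of summation with the Laplace integral, and verify that $t\mapsto p_{i,j}(t)$ lies in the class where $\mathcal{D}_t^\alpha$ and the inversion are meaningful; non-explosiveness ($\zeta=\infty$) is precisely what keeps $P(t)$ stochastic and these manipulations valid. A second, more conceptual point is that \eqref{Markov renewal equation} is intrinsically backward in flavour, so the forward equation \eqref{zzz} cannot be read off directly and genuinely relies on the commutativity (resolvent) argument above.
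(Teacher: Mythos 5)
Your proposal is correct and follows essentially the same route as the paper: Laplace-transform the renewal equation \eqref{Markov renewal equation}, use the Mittag-Leffler transforms and $g_{i,l}=\lambda_i(h_{i,l}-\delta_{i,l})$ to match the transform of the Caputo backward system, then obtain the forward system from the common resolvent representation $\widetilde{P}(s)=s^{\alpha-1}(s^{\alpha}I-G)^{-1}$. Your explicit remark that $G$ commutes with its resolvent, and your closing caveats about infinite state spaces, are slightly more careful than the paper's own write-up but do not change the argument.
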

\begin{proof}
By the convolution Theorem we can compute the Laplace transform in \eqref{aaaa} and we obtain
\begin{align} 
s^{\alpha}\widetilde{p}_{i,j}(s)-\frac{s^{\alpha}}{s}p_{i,j}(0) \, =  \, \sum _k g_{i,k}\widetilde{p}_{k,j}(s). \label{cccc}
\end{align}
Instead, by applying the Laplace tranform to (\ref{Markov renewal equation}) we have
\begin{align} \label{secondac}
\widetilde{p}_{i,j}(s)= \frac{s^{\alpha} }{s(\lambda _i+s^{\alpha} )}\delta _{i,j} +\sum _l h_{i,l}\, \widetilde{p}_{l,j}(s)\frac{\lambda _i}{\lambda _i+ s^{\alpha}}.
\end{align}
By setting $g_{i,j}=\lambda _i (h_{i,j}-\delta _{i,j})$, it is easy to show that \eqref{secondac} reduces to \eqref{cccc} and using the uniqueness theorem for Laplace tranforms eq. \eqref{aaaa} is proved. Now if we apply again the Laplace transform to \eqref{zzz} we get that
\begin{align}
s^{\alpha}\widetilde{p}_{i,j}(s)- \frac{s^{\alpha }}{s}p_{i,j}(0)= \sum _k g_{k,j}\widetilde{p}_{i,k}(s), \label{ccccC}
\end{align}
and thus the solution of \eqref{aaaa} and \eqref{zzz} coincide. Indeed they can be obtained by solving either the system  \eqref{cccc} or \eqref{ccccC} in the variables $\tilde{p}_{i,j}(s)$, which, in compact operator form, reads
\begin{align} \label{soluz 1}
\tilde{P}(s)= s^{\alpha -1}(s^{\alpha}I-G)^{-1}
\end{align}
where $I$ is the identity matrix, and this concludes the proof.
\end{proof}
\begin{os}
We observe that \eqref{aaaa} derives directly by the renewal equation, which has a clear backward meaning. Instead the reason why we call eq. \eqref{zzz} ``forward equation" is that it is formally obtained by introducing the fractional Caputo derivative in the Kolmogorov forward equation \eqref{forward markoviana}; the fact that it has a clear probabilistic interpretation has never been proved.
A clear probabilistic meaning to \eqref{zzz} will be derived later in Section \ref{4} from the general form of the forward equation of semi-Markov processes we will present. Concerning forward equations of semi-Markov processes, see also the discussion in \cite{fellersemi}.
\end{os}

It is well known that \eqref{processo principale} can be equivalently constructed by replacing the time $t$ in \eqref{definizione processo markov} with the right continuous inverse of an independent $\alpha$-stable subordinator. For the sake of clarity we here report a sketched proof of this  fact, which essentially follows \cite[Theorem 2.2]{Meerschaert1}. Let $H$ and $L$ respectively denote the $\alpha$-stable subordinator and its inverse, i.e.,
\begin{align}
L(t) \, := \, \inf \ll s \geq 0 : H(s) >t \rr.
\end{align}
In order to prove that \eqref{processo principale} is the same process as $M(L(t))$ it is sufficient to prove that $M(L(t))$ has the same Mittag-Leffler intertimes of \eqref{processo principale}. This is clear since to construct a semi-Markov process in the sense of Gihman and Skoroohod, as in Section \ref{2}, it is sufficient to have an embedded chain $X_n$ and a sequence of independent r.v.'s representing the holding times. Here $M(t)$ and \eqref{processo principale} have the same embedded chain and thus it remains only to show that they have the same waiting times. Since
\begin{align}
M(t)=X_n \qquad  V_n  \leq t< V_{n+1}
\end{align}
we have
\begin{align}
M(L (t))=X_n \qquad  V_n  \leq L(t) < V_{n+1}
\end{align}
which is equivalent to (by \cite[Lemma 2.1]{Meerschaert1})
\begin{align}
M(L (t))=X_n \qquad H(V_n-)  \leq t < H(V_{n+1}-).
\label{limsin}
\end{align}
Further, by \cite[Lemma 2.3.2]{applebaum} we have that $H(V_n-)=H(V_n)$, a.s., we can rewrite \eqref{limsin} as
\begin{align}
M(L (t))=X_n \qquad H(V_n)  \leq t < H(V_{n+1}).
\end{align}
Thus the jump times $\tau_n$  of $M(L(t))$  are such that $\tau_n \stackrel{d}{=} H(V_n)$ and since $H$ has stationary increments, the holding times of $M(L(t))$ become, for any $n$
\begin{align}
 \tau_{n+1}-\tau_n= H (V_{n+1})-H (V_n) \stackrel{d}{=} H (E_n),
 \end{align}
where we used that $V_{n+1}-V_n$ are exponential r.v.'s $E_n$.
By a standard conditioning argument we have, under $P \l \cdot \mid X_n=x \r$,
\begin{align}
\mathds{E}e^{-\eta H(E_n)}= \frac{\lambda _x}{\lambda _x +\eta ^{\alpha}}.
\end{align}
Now since \cite[eq. (3.4)]{meerbounded}
\begin{align}
\int_0^\infty e^{-\eta t} E_{\alpha}(-\lambda_x t^{\alpha})dt \, = \, \eta^{\alpha-1} \frac{1}{\lambda_x + \eta^{\alpha}}
\end{align}
we have,
\begin{align}
-\int_0^\infty e^{-\eta t} \frac{d}{dt} E_{\alpha}(-\lambda_x t^{\alpha})dt \, = \, \frac{\lambda_x}{\lambda_x+\eta^{\alpha}}
\label{lapldensmittag}
\end{align}
and this shows that the holding times have the same distribution.

\begin{ex}[The fractional Poisson process]
One of the most popular CTRW with heavy tailed waiting times is the so-called Fractional Poisson process, corresponding to the case where $\lambda_i= \lambda$, $h_{i,i+1}=1$ and $X(0)=0$ a.s. It has been studied by a number of authors (see for example \cite{Beghin2, LaskinS, MainardiS, Meerschaert1, RepinS}).

Its transition probabilities  \eqref{bbbb}
 solve the system of fractional Kolmogorov ``forward" equations
\begin{align}
\mathcal{D}_t ^\alpha p_{i,j}(t)& = -\lambda p_{i,j}(t)+\lambda p_{i,j-1}(t) \qquad j\geq i \qquad p_{i,j}(0)=\delta _{i,j}
\end{align}
as well as the system of Kolmogorov fractional backward equations
\begin{align}
\mathcal{D}_t^\alpha p_{i,j} (t)&= -\lambda p_{i,j}(t)+\lambda p_{i+1,j}(t) \qquad j\geq i \qquad p_{i,j}(0)= \delta _{i,j}
\end{align}
and it is easy to check directly that their common explicit solution in Laplace space is
\begin{align}
\widetilde{p}_{i,j}(s)= \frac{\lambda ^{j-i}s^{\alpha -1}}{(\lambda +s^{\alpha})^{j-i+1}}\label{soluzione poissson classico}.
\end{align}
However, the equation often reported in the literature (e.g. \cite{Beghin}) is 
\begin{align}
\mathcal{D}_t^\alpha p_k (t)&= -\lambda p_k(t)+\lambda p_{k-1}(t) \qquad k\geq 0 \label{equazione di poisson frazionaria} \\ p_k(0)&= \delta _{k,0} 
\notag
\end{align}
which is the ``forward" equation corresponding to the special case $i=0$. In \cite{Meerschaert1}, the authors proved that the fractional Poisson process can be constructed as a standard Poisson process with the time variable replaced by an inverse stable subordinator. 
\end{ex}

\section{Semi-Markov models for motion in heterogeneous media}
\label{3}
We now show how the tools used for modeling homogeneous media can be adapted to include heterogeneity, in the sense that the trapping effect exercised in different sites depends on the current position. To be consistent  with the literature introduced in Section \ref{2} we first focus on the case in which the holding time at the position $x$ has a density $\psi(t) \sim t^{-\alpha(x)-1}$. How this can be generalized to different decaying patterns will be showed later. Hence we consider now a CTRW defined exactly as in \eqref{processo principale}, except for the distribution of the waiting times, which here present a state dependent fractional order:
\begin{align}
X(t)=X_n, \qquad   T_n \leq t < T_{n+1}, \qquad \text{where } T_0 =0 \qquad T_n= \sum _{k=0}^{n-1} J_k \notag \\
P(J_n>t|X_n=i)= \overline{F}_i(t)=E_{\alpha _i}(-\lambda _i t^{\alpha _i}) \qquad \alpha _i \in (0,1).
\label{processo da studiare}
\end{align}
Use again \cite[eq. (26)]{Scalas2006Lecture} to say that, for a constant $C>0$ depending on $\alpha_i$ and $\lambda_i$, we have, as $t \to \infty$
\begin{align}
-\frac{d}{dt} E_{\alpha _i}(-\lambda _i t^{\alpha _i}) \, \sim  \, C t^{-\alpha_i-1}
\end{align}
and thus this is a model of a motion performed in a medium where the trapping effect has not the same intensity at all sites.

Before moving to the equation, it is usefull to show that also in this situation it is possible to interpret the semi-Markov process $X(t)$ as the time-change of a Markov process. However this is far from trivial and requires some analysis which is carried out in the following section.

\subsection{The time-change by a dependent time process}
In order to have an interpretation of \eqref{processo da studiare}  as a time-changed process, we need the notion of multistable subordinator  (see for example \cite{molcha} and  \cite{Orsingher1}). Strictly speaking,  a multistable subordinator is a generalization of a stable subordinator, in the sense that the stability index is a function of time $\alpha= \alpha (t) \in (0,1)$. The intensity of jumps is described by a time-dependent L\'evy measure
\begin{align}
\nu (dx,t)= \frac{\alpha (t) x^{-\alpha (t)-1}dx}{\Gamma (1-\alpha (t))} \qquad x>0.
\label{levmulti}
\end{align}
A multistable subordinator $\sigma(t)$, $t \geq 0$ is an additive process in the sense of \cite{Sato}, i.e., it is right-continuous and has independent but non stationary increments. Hence all the finite dimensional distributions are completely determined by the distribution of the increments which can be obtained from \eqref{levmulti}. Therefore (see \cite[Section 2]{Orsingher1} for details on this point)
\begin{align}
\mathds{E}e^{-\eta (\sigma(t)-\sigma(s))}= e^{-\int _s ^t \eta ^{\alpha (\tau)}d\tau}, \qquad 0\leq s \leq t.
\label{laplincr}
\end{align}
Multistable subordinators are particular cases of a larger class of processes, known as non-homogeneous subordinators, which were introduced in \cite{Orsingher1}. 

\begin{defin}
\label{defpiec}
A multistable subordinator $\sigma(t)$, $t \geq 0$, is said to be piecewise stable if there exists a sequence  $\alpha _j \in (0,1)$ and a sequence $t_j\geq 0$ such that
 the stability index can be written as 
\begin{align}
\alpha (t)= \alpha _j \qquad    t_{j}  \leq t< t_{j+1}
\end{align}
and thus the time-dependent L\'evy measure has the form
\begin{align} \label{misura di levy multistabile a tratti}
\nu (dx,t) =  \frac{\alpha _j x^{-\alpha _j-1}}{\Gamma (1-\alpha _j)}dx , \qquad  t_j \leq t < t_{j+1}.
\end{align}
\end{defin}
Note that to each $\alpha _j \in (0,1)$ there corresponds a stable subordinator $H_{\alpha _j}(t)$ with index $\alpha _j$ in such a way that $\sigma $ is defined as
\begin{align} \label{incrementi multistabile a tratti}
\sigma (t)=\sigma (t_j)+ H_{\alpha _j} (t-t_j) \qquad \forall t\in [t_j, t_ {j+1}).
\end{align}
The following theorem shows that \eqref{processo da studiare} is given by a Markov process time-changed by the inverse of a piecewice stable subordinator.  The major novelty consists in the fact that the original process and the random time are not independent as in the classical case. This reflects the fact that the intensity of the trapping effect is not space homogeneous, i.e., the time delay depends on the current position.  

\begin{te}
\label{tetimechfrac}
Let $M$ be a Markov process defined as in (\ref{definizione processo markov}). Moreover, let $\sigma^M(t)$ be a multistable (piecewise stable) subordinator dependent on $M$ whose L\'evy measure is given, conditionally on $V_1 =v_1,V_2=v_2, \cdots$ and $X_1=x_1, X_2=x_2, \cdots$ by
\begin{align} 
\nu ^M(dx,t) =  \frac{\alpha _{x_j} x^{-\alpha _{x_j}-1}}{\Gamma (1-\alpha _{x_j})}dx,  \qquad     v_{j} \leq t< v_{j+1}.
\end{align}
Let $L^M(t)$ be the right-continuous process
\begin{align}
L^M(t) : = \, \inf \ll s \geq 0 : \sigma^M (s) > t \rr.
\end{align}
Then the time-changed process $M(L^M(t))$ is the same process as \eqref{processo da studiare}.
\end{te}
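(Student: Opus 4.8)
The plan is to follow the strategy used for the homogeneous case earlier in this section, adapting the argument of \cite[Theorem 2.2]{Meerschaert1} to the present dependent time change. Both $M(L^M(t))$ and the process \eqref{processo da studiare} are semi-Markov processes in the sense of Gihman and Skorohod, and such a process is completely determined by its embedded chain together with the conditional law of its holding times given that chain. It therefore suffices to verify two things: that $M(L^M(t))$ has the same embedded chain $X_n$ as \eqref{processo da studiare}, and that, conditionally on $X_n=x$, its $n$-th holding time has distribution $E_{\alpha_x}(-\lambda_x t^{\alpha_x})$. The first point is immediate, since a time change by a non-decreasing process preserves the sequence of states visited; hence the embedded chain of $M(L^M(t))$ coincides with the chain $X_n$ of $M$, which is by construction the same chain appearing in \eqref{processo da studiare}.

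For the holding times I would first locate the jump instants of $M(L^M(t))$. Exactly as in the homogeneous computation, writing $M(t)=X_n$ for $V_n \le t < V_{n+1}$ and applying \cite[Lemma 2.1]{Meerschaert1} gives $M(L^M(t))=X_n$ for $\sigma^M(V_n-)\le t<\sigma^M(V_{n+1}-)$, while \cite[Lemma 2.3.2]{applebaum} yields $\sigma^M(V_n-)=\sigma^M(V_n)$ a.s. Thus the jump times are $\tau_n=\sigma^M(V_n)$ and the $n$-th holding time is
\begin{align*}
\tau_{n+1}-\tau_n=\sigma^M(V_{n+1})-\sigma^M(V_n).
\end{align*}
Now I invoke the piecewise structure \eqref{incrementi multistabile a tratti}: conditionally on the skeleton $(V_j,X_j)_j$, on the interval $[V_n,V_{n+1})$ the subordinator evolves as $\sigma^M(t)=\sigma^M(V_n)+H_{\alpha_x}(t-V_n)$ with $x=X_n$, so that $\tau_{n+1}-\tau_n\stackrel{d}{=}H_{\alpha_x}(E_n)$, where $E_n=V_{n+1}-V_n$ is exponential with parameter $\lambda_x$ and independent of the stable subordinator $H_{\alpha_x}$. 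Conditioning on $E_n$ and using the Laplace exponent $\eta\mapsto\eta^{\alpha_x}$ of $H_{\alpha_x}$ gives, under $P\l \cdot \mid X_n=x \r$,
\begin{align*}
\mathds{E}\,e^{-\eta(\tau_{n+1}-\tau_n)}=\mathds{E}\,e^{-\eta^{\alpha_x}E_n}=\frac{\lambda_x}{\lambda_x+\eta^{\alpha_x}},
\end{align*}
which by \eqref{lapldensmittag} is precisely the Laplace transform of the density $-\frac{d}{dt}E_{\alpha_x}(-\lambda_x t^{\alpha_x})$. By uniqueness of Laplace transforms the holding-time laws match those of \eqref{processo da studiare}.

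The main obstacle is that, unlike the homogeneous case where $H$ and $M$ are independent, here $\sigma^M$ is built on the path of $M$, so the two are genuinely dependent. The delicate point is therefore to justify rigorously the conditional independence of the successive holding times given the embedded chain, and to argue that the increment law is really $H_{\alpha_x}(E_n)$ with $H_{\alpha_x}$ independent of $E_n$. This rests on the additive (independent-increment) structure of $\sigma^M$: conditionally on the whole skeleton $(V_j,X_j)_j$, the increments of $\sigma^M$ over the disjoint sojourn intervals $[V_n,V_{n+1})$ are independent and, by \eqref{laplincr} together with \eqref{misura di levy multistabile a tratti}, the increment over $[V_n,V_{n+1})$ depends only on $\alpha_{X_n}$ and the length $E_n$. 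Care must be taken that the exponential length $E_n$ is independent of the stable piece generating that increment, so that the conditioning on $E_n$ in the Laplace computation is legitimate; once this conditional independence is established, the remaining steps are the routine computations indicated above.
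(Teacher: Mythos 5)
Your proposal is correct and follows essentially the same route as the paper's proof: same embedded chain, localization of the jump times via \cite[Lemma 2.1]{Meerschaert1}, identification of the holding times as $H_{\alpha_x}(E_n)$ through the piecewise structure \eqref{incrementi multistabile a tratti}, and the Laplace transform match with \eqref{lapldensmittag}. The only (cosmetic) difference is that for the identity $\sigma^M(V_n-)=\sigma^M(V_n)$ the paper invokes \cite[Theorem 2.1]{Orsingher1}, since conditionally on the skeleton $\sigma^M$ is an additive rather than a L\'evy process, whereas you reuse the reference to \cite[Lemma 2.3.2]{applebaum} from the homogeneous case; your explicit discussion of the conditional independence given the skeleton is the ``standard conditioning argument'' the paper leaves implicit.
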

\begin{proof}
The proof is on the line of the discussion at the end Section \ref{2}. To prove that \eqref{processo da studiare} coincides with $M(L^M(t))$ it is sufficient to prove that $M(L^M(t))$ has the same Mittag-Leffler intertimes of \eqref{processo da studiare} since $M(t)$ and $X(t)$ have the same embedded chain.  Let $V_n$, $n\geq 1$, be the jump times of $M$. Since
\begin{align}
M(t)=X_n \qquad  V_n  \leq t< V_{n+1}
\label{this}
\end{align}
we have
\begin{align}
M(L^M (t))=X_n \qquad  V_n  \leq L^M(t) < V_{n+1}.
\end{align}
Now by \cite[Theorem 2.2]{Orsingher1} we know that $\sigma^M(t)$ is strictly increasing and then we can apply \cite[Lemma 2.1]{Meerschaert1} to say that \eqref{this} is equivalent to
\begin{align}
M(L^M (t))=X_n \qquad \sigma^M(V_n-)  \leq t < \sigma^M(V_{n+1}-).
\label{equiv}
\end{align}
Now use \cite[Theorem 2.1]{Orsingher1} to say that, a.s.,  $\sigma^M(t) = \sigma^M (t-)$ and thus to say that \eqref{equiv} is equivalent to
\begin{align}
M(L^M (t))=X_n \qquad \sigma^M(V_n)  \leq t < \sigma^M(V_{n+1}).
\end{align}
Thus the jump times $\tau_n$  of $M(L^M(t))$  are such that $\tau_n \stackrel{d}{=}\sigma ^M(V_n)$ and, by \eqref{incrementi multistabile a tratti}, the holding times are such that, under $P \l \cdot \mid X_n = x \r$,
\begin{align}
 \tau_{n+1}-\tau_n= \sigma^M (V_{n+1})-\sigma^M (V_n) \stackrel{d}{=} H_{\alpha _x} (E_n).
 \end{align}
By a standard conditioning argument we have
\begin{align}
\mathds{E}e^{-\eta H_{\alpha _x}(E_n)}= \frac{\lambda _x}{\lambda _x +\eta ^{\alpha _x}}.
\end{align}
This fact together with formula \eqref{lapldensmittag} concludes the proof.
\end{proof}

\subsection{Variable order backward equations}
We here derive the backward equation for the semi-Markov process \eqref{processo da studiare} in this new heterogeneous framework and we show that this equation becomes fractional of order $\alpha(i)$ where $i$ is the state where the transition is started.
\begin{te}
\label{teoback}
The transition functions of \eqref{processo da studiare} $p_{i,j}(t)$, $i,j \in \mathcal{S}$,  
solve the following system of backward equations
\begin{align} \label{aa}
\mathcal{D}_t^{\alpha _i} \, p_{i,j}(t)&= \sum _k g_{i,k}p_{k,j}(t) , \qquad p_{i,j}(0)= \delta _{i,j} .
\end{align}
\end{te}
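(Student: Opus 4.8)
The plan is to mirror the proof of Proposition \ref{teofrachom}, the only difference being that the constant order $\alpha$ is everywhere replaced by the order $\alpha_i$ attached to the \emph{starting} state. Since the process \eqref{processo da studiare} is constructed exactly as the general semi-Markov process of Section \ref{2}, only with state-dependent waiting-time laws, the Markov renewal equation \eqref{Markov renewal equation} applies verbatim, now read with the survival function $\overline{F}_i(t) = E_{\alpha_i}(-\lambda_i t^{\alpha_i})$ and the associated density $\mathpzc{f}_i$. No extra probabilistic input is needed at this stage; in particular the time-change representation of Theorem \ref{tetimechfrac} is not required for this argument.

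First I would take the Laplace transform of \eqref{Markov renewal equation}. For this I use the transform of the Mittag-Leffler survival function, $\int_0^\infty e^{-st} E_{\alpha_i}(-\lambda_i t^{\alpha_i})\, dt = s^{\alpha_i - 1}/(\lambda_i + s^{\alpha_i})$, together with the state-dependent version of \eqref{lapldensmittag}, which gives $\widetilde{\mathpzc{f}}_i(s) = \lambda_i/(\lambda_i + s^{\alpha_i})$. Applying the convolution theorem then yields the Laplace-space renewal identity
\begin{align}
\widetilde{p}_{i,j}(s) = \frac{s^{\alpha_i - 1}}{\lambda_i + s^{\alpha_i}}\, \delta_{i,j} + \sum_l h_{i,l}\, \widetilde{p}_{l,j}(s)\, \frac{\lambda_i}{\lambda_i + s^{\alpha_i}},
\end{align}
which is the exact analog of \eqref{secondac}.

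Next I would transform the candidate equation \eqref{aa}. The crucial observation is that, once the starting state $i$ is fixed, the order $\alpha(x) = \alpha_i$ is a genuine constant, so $\mathcal{D}_t^{\alpha_i}$ acting on $t \mapsto p_{i,j}(t)$ is an ordinary Caputo derivative of order $\alpha_i$ and transforms, through \eqref{defcaputo}, as $s^{\alpha_i} \widetilde{p}_{i,j}(s) - s^{\alpha_i - 1} p_{i,j}(0)$. Thus the Laplace transform of \eqref{aa} reads $s^{\alpha_i}\widetilde{p}_{i,j}(s) - s^{\alpha_i - 1}\delta_{i,j} = \sum_k g_{i,k}\widetilde{p}_{k,j}(s)$. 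Multiplying the renewal identity by $(\lambda_i + s^{\alpha_i})$, substituting $g_{i,l} = \lambda_i(h_{i,l} - \delta_{i,l})$, and using $p_{i,j}(0) = \delta_{i,j}$ reduces it to precisely this transformed equation; uniqueness of Laplace transforms then establishes \eqref{aa}.

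I do not expect a serious obstacle here, as the argument is structurally identical to the homogeneous case. The only point deserving care is conceptual rather than technical: one must verify that the fractional order appearing in the backward equation is exactly $\alpha_i$, the order of the state from which the transition is initiated. This is forced by the Laplace transform of the state-dependent holding-time law, whose denominator carries $s^{\alpha_i}$; the backward equation is therefore indexed by $i$, and for each such $i$ the variable-order operator collapses to a constant-order Caputo derivative, which is what makes the verification routine.
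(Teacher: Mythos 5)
Your argument is correct and follows exactly the same route as the paper: Laplace-transform the Markov renewal equation \eqref{Markov renewal equation} with the state-dependent Mittag-Leffler law, Laplace-transform the candidate equation \eqref{aa} (where, for fixed $i$, the Caputo derivative has constant order $\alpha_i$), and match the two via $g_{i,j}=\lambda_i(h_{i,j}-\delta_{i,j})$ and uniqueness of Laplace transforms. No substantive difference from the paper's proof.
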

\begin{proof}
We  can perform Laplace transform computation similar to that in the proof of Proposition \ref{teofrachom}. By applying the Laplace transform to \eqref{aa} we obtain
\begin{align} 
s^{\alpha _i}\widetilde{p}_{i,j}(s)-s^{\alpha _i-1}p_{i,j}(0)= \sum _k g_{i,k}\widetilde{p}_{k,j}(s). \label{ccc}
\end{align}
Instead, by applying the Laplace tranform to (\ref{Markov renewal equation}) we have
\begin{align} \label{seconda}
\widetilde{p}_{i,j}(s)= \frac{s^{\alpha _i} }{s(\lambda _i+s^{\alpha _i} )}\delta _{i,j} +\sum _l h_{i,l}\, \widetilde{p}_{l,j}(s)\frac{\lambda _i}{\lambda _i+ s^{\alpha _i}}.
\end{align}
By setting $g_{i,j}=\lambda _i (h_{i,j}-\delta _{i,j})$, it is easy to show that \eqref{seconda} can be rewritten as \eqref{ccc} and by the uniqueness theorem for Laplace tranform the desired result is immediate.
\end{proof}
The explicit form of the transition probabilities is easy obtained in Laplace space.
By applying the Laplace transform, the system of fractional equations \eqref{aa} reduces to the system of linear equations \eqref{ccc} in the variables $\widetilde{p}_{i,j}(s)$.
In compact matrix form, \eqref{ccc} can be written as
\begin{align}
\Lambda \widetilde{P}(s)- s ^{-1}\Lambda  I= G \widetilde{P}(s)
\end{align}
where $(\widetilde{P}(s))_{i,j}= \widetilde{p}(s)_{i,j}$, $I$ is the identity matrix, while
\begin{align}
\Lambda  = \text{diag} (s^{\alpha_1}, s ^{\alpha _2},\dots, s ^{\alpha _n}...).
\end{align}
Thus the solution in matrix form is written as
\begin{align}
\widetilde{P}(s)= \frac{1}{s}(\Lambda -G)^{-1} \Lambda I.
\end{align}

\section{The forward equations of semi-Markov processes in heterogeneous media}
\label{4}
In the spirit of what happens in the homogeneous case one can be tempted to look for the forward equation by trying to replace the ordinary time-derivative in \eqref{forward markoviana} with a variable-order Caputo derivative $\mathcal{D}_t^{\alpha (\cdot)}$, where $(\cdot)$ denotes the final state $j$. However, such an attempt is unsuccessful since it can be shown that the solution in the Laplace space does not coincide with the one of \eqref{aa}. We discuss here an example from the literature in which it is showed that this approach fails.

\begin{ex}[The state dependent fractional Poisson process]
\label{expoi1}
In the pioneering work \cite{Garra}, the authors studied a generalization of the fractional Poisson process in which the waiting times are independent but not identically distributed.
For a given sequence $J_n$, $n\geq 0$, of independent r.v.'s with distribution 
\begin{align} \label{Mittag leffler distribution state dependent}
P(J_k \geq t)= E_{\alpha _k}(-\lambda t^{\alpha _k}), \qquad \alpha _k \in (0,1),
\end{align}
they defined the state dependent fractional Poisson process as
\begin{align}
\mathcal{N}(t)=n \qquad  T_n    \leq t < T_{n+1} \label{definizione state dependent}
\end{align} 
where $T_n= \sum _{k=0}^{n-1} J_k$, $T_0=0$. Further they proved that the state probabilities $p_{k}(t):= P(\mathcal{N}(t)=k \mid \mathcal{N}(0) = 0)$ are such that
\begin{align} \label{probabilita di stato state dependent}
\widetilde{p}_k(s)= \int _0 ^{\infty}e^{-st} p_k(t)dt=   \frac{\lambda ^{k}s^{\alpha_k-1}}{\prod_{i=0}^k (s^{\alpha _i}+\lambda)}.
\end{align}
The authors noticed that apparently $\mathcal{N}(t)$ is not governed by fractional differential equations, since the state probabilities corresponding to \eqref{probabilita di stato state dependent} do not solve the fractional ``forward" equation with variable order derivative
\begin{align}
\mathcal{D}_t^{\alpha_k} p_k(t)=-\lambda p_k(t)+\lambda p_{k-1}(t).
\label{44}
\end{align}
Moreover, the construction of $\mathcal{N}(t)$ as a time-changed process was not clear.

An application of our results to this particular situation also sheds light to such problems. In particular, from Theorem \ref{teoback} it follows that the transition probabilities, which have explicit Laplace transform
\begin{align} \label{aaa}
\tilde{p}_{i,j}(s)= \int _0^\infty e^{-st} p_{i,j}(t)dt= \frac{s^{\alpha _j -1}\lambda ^{j-i}}{\prod _{k=i}^j (\lambda +s^{\alpha _k})},
\end{align}
are really related with fractional calculus and indeed they solve the following system of fractional Kolmogorov backward equations
\begin{align} \label{equazione backward Poisson state-dependent}
\mathcal{D}_t^{\alpha _i}  p_{i,j}(t)&=-\lambda p_{i,j}(t)+\lambda p_{i+1,j}(t) \qquad j\geq i\\
p_{i,j}(0)&= \delta _{i,j}. \notag
\end{align}
Moreover, by Theorem \ref{tetimechfrac} it follows that to obtain \eqref{definizione state dependent}, a standard Poisson process must be composed with a dependent multistable subordinator. In what follows we determine the structure of the forward equations for semi-Markov processes defined as in \eqref{processo da studiare}. The state-dependent fractional Poisson process is therefore a particular case, so a fractional forward equation for this process can be really written down and does not coincide with \eqref{44}.

We observe that in this framework the authors of \cite{Beghin3} also studied the time-change of a Poisson process by means of the inverse of a multistable subordinator. However in this case the multistable subordinator is assumed independent from the Poisson process. Hence the resulting process is a random walk with independent but non identically distributed sojourn times, whose probability law is a time-inhomogeneous generalization  of the Mittag-Leffler distribution, which is not coinciding with \eqref{Mittag leffler distribution state dependent}. Hence, the  process in \cite{Beghin3} is not a model for motions in heterogeneous media, but rather in an environment whose physical conditions change over time.
\end{ex}

We now derive the system of forward (or Fokker-Planck) equations governing the process \eqref{processo da studiare}. Since the random walk has place in a heterogeneous medium, an adequate kinetic description of the process requires variable order fractional operators.
The proof of the following theorem is based on the quantities
\begin{align*}
J_i^+(t)dt& =P((X_{t+dt}=i) \cap (X_t\neq i)) \notag \\
J_i^-(t)dt& = P((X_{t+dt}\neq i) \cap (X_t= i)) \notag 
\end{align*}
where $J_i^+$ and $J_i^-$ represent the gain and the loss fluxes for the state $i$.
It is intuitive that to deal with $J_i^+$ and $J_i^-$ it is convenient to assume that the when the process jumps (so when $t=T_n$ for some $n$) it can't jump in its current position. Hence we will assume in the following theorem that $h_{i,i}=0$. However this is not strictly necessary and by adapting the notations one can generalize and remove the assumption.
\begin{os}
\label{remrendens}
The Markov property is a consequence of the lack of memory property of the exponential distribution, which roughly states that when the Markov process is at $i$ the probability of having a jump in an infinitesimal interval of time $dt$ is $\lambda_idt$, so the rate $\lambda_i$ is constant in time. Of course since the lack of memory is not true for other distributions, we must have here that the rate varies with time, i.e., it is $\lambda_i u_i(t) dt$. It turns out that in this case the function $u_i(t)$ is given by
\begin{align}
u_i(t) \, = \, \frac{t^{\alpha_i-1}}{\Gamma(\alpha_i)}, \qquad \alpha_i \in (0,1).
\end{align}
This fact will be proved in the general situation in Section \ref{6}. We remark that the probability that in an infinitesimal interval there is more than one jump is $o(dt)$. This is because by construction we know that $X(t)$ is the same process as $M(t^\prime)$ with $t = \sigma^M(t^\prime)$: since $\sigma^M(t^\prime)$ is, a.s., strictly increasing on any finite interval of time and continuous, we have that within an infinitesimal time interval $dt$ also the process $X(t)$ performs at most one jump.
\end{os}
\begin{te}
\label{teforward}
Let $X$ be the process in \eqref{processo da studiare}. Assume that the $h_{i,i}=0$. Then the transition probabilities
\begin{align*}
p_{l,i}(t)\, = \,  P(X(t)=i|X(0)=l, \gamma (0)=0) \qquad l,i\in \mathcal{S}
\end{align*}
solve the following system of fractional forward equations
\begin{align}
\frac{d}{dt}p_{l,i}(t)= \sum _k g_{k,i}\,  ^R\mathcal{D}_t^{1-\alpha _k} p_{l,k}(t).\label{F}
\end{align}
\end{te}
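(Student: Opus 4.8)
The plan is to derive \eqref{F} from the classical probabilistic balance $\frac{d}{dt}p_{l,i}(t) = J_i^+(t) - J_i^-(t)$ between the gain and loss fluxes introduced before the statement, and then to recognise each flux as a Riemann--Liouville derivative of an occupation probability. Writing $q_i(t)$ for the rate at which $X$ enters the state $i$ through a jump at time $t$, the renewal structure of \eqref{processo da studiare} (the same conditioning on the first jump that produces \eqref{Markov renewal equation}) yields the two convolution identities
\begin{align*}
p_{l,i}(t) &= \delta_{l,i}\overline{F}_i(t) + \int_0^t q_i(s)\,\overline{F}_i(t-s)\,ds, \\
J_i^-(t) &= \delta_{l,i}\mathpzc{f}_i(t) + \int_0^t q_i(s)\,\mathpzc{f}_i(t-s)\,ds,
\end{align*}
together with $q_i(t) = J_i^+(t) = \sum_k h_{k,i}J_k^-(t)$, where the assumption $h_{i,i}=0$ guarantees that every jump out of $i$ is a genuine loss and every jump into $i$ a genuine gain.

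The main step is to eliminate $q_i$ and express $J_i^-$ through $p_{l,i}$. Taking Laplace transforms of the first two displays and dividing, the unknown $\delta_{l,i}+\widetilde{q}_i(s)$ cancels and leaves $\widetilde{J_i^-}(s) = \big(\widetilde{\mathpzc{f}}_i(s)/\widetilde{\overline{F}}_i(s)\big)\widetilde{p}_{l,i}(s)$. Using the Mittag--Leffler transforms recalled in Section \ref{2} (so $\widetilde{\overline{F}}_i(s) = s^{\alpha_i-1}/(s^{\alpha_i}+\lambda_i)$ and, by \eqref{lapldensmittag}, $\widetilde{\mathpzc{f}}_i(s) = \lambda_i/(s^{\alpha_i}+\lambda_i)$) the ratio collapses to $\lambda_i s^{1-\alpha_i}$, whence $\widetilde{J_i^-}(s) = \lambda_i s^{1-\alpha_i}\widetilde{p}_{l,i}(s)$. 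Since $s^{1-\alpha_i}\widetilde{p}_{l,i}(s)$ is exactly the Laplace transform of $^R\mathcal{D}_t^{1-\alpha_i}p_{l,i}(t)$ --- whose kernel $t^{\alpha_i-1}/\Gamma(\alpha_i)$ is the function $u_i$ of Remark \ref{remrendens} --- inverting gives the loss-flux identity $J_i^-(t) = \lambda_i\, ^R\mathcal{D}_t^{1-\alpha_i}p_{l,i}(t)$, and consequently $J_i^+(t) = \sum_k \lambda_k h_{k,i}\, ^R\mathcal{D}_t^{1-\alpha_k}p_{l,k}(t)$.

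Substituting both fluxes into the balance equation and collecting terms gives $\frac{d}{dt}p_{l,i}(t) = \sum_k (\lambda_k h_{k,i} - \lambda_i\delta_{k,i})\, ^R\mathcal{D}_t^{1-\alpha_k}p_{l,k}(t)$; since $\lambda_i\delta_{k,i} = \lambda_k\delta_{k,i}$ the coefficient is precisely $g_{k,i} = \lambda_k(h_{k,i}-\delta_{k,i})$, and \eqref{F} follows. I expect the genuine obstacle to be the loss-flux identity $J_i^- = \lambda_i\, ^R\mathcal{D}_t^{1-\alpha_i}p_{l,i}$, i.e.\ making rigorous that the instantaneous escape rate from a site carries the memory of the (age of the) sojourn exactly as a Riemann--Liouville derivative; this is where the non-Markovian, age-dependent hazard of Remark \ref{remrendens} must be controlled, and the Laplace argument above is the cleanest way to do so.

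As a rigorous check, and to invoke uniqueness of Laplace transforms, I would verify the transform of \eqref{F} directly against the known solution: \eqref{F} transforms to the matrix identity $s\widetilde{P}(s) - I = \widetilde{P}(s)\,(s\Lambda^{-1})\,G$ with $\Lambda = \mathrm{diag}(s^{\alpha_k})$, and substituting $\widetilde{P}(s) = s^{-1}(\Lambda-G)^{-1}\Lambda$ from Theorem \ref{teoback} reduces both sides to $(\Lambda-G)^{-1}G$. This confirms \eqref{F} and shows that it shares the solution of the backward system, as expected.
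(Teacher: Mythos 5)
Your argument is correct and follows essentially the same route as the paper: the flux balance $\frac{d}{dt}p_{l,i}=J_i^+-J_i^-$, a renewal identity for the loss flux, and a Laplace-transform computation identifying $J_i^-(t)=\lambda_i\,^R\mathcal{D}_t^{1-\alpha_i}p_{l,i}(t)$ before substituting back. The only (cosmetic) difference is that you eliminate the entry rate $q_i$ by dividing the transforms of $p_{l,i}$ and $J_i^-$, whereas the paper eliminates $J_i^+$ via the balance equation to reach the equivalent identity $\widetilde{J}_i^-(s)=\frac{s\widetilde{\mathpzc{f}}_i(s)}{1-\widetilde{\mathpzc{f}}_i(s)}\widetilde{p}_{l,i}(s)$ (the two coincide since $\widetilde{\overline{F}}_i(s)=(1-\widetilde{\mathpzc{f}}_i(s))/s$); your closing consistency check against the backward solution is a sound extra verification not present in the paper.
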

\begin{proof}
The probability that the process performs more than one jump in an infinitesimal interval is $o(dt)$ is discussed in Remark \ref{remrendens}.
Let $J_i^+(t)dt$ be the probability of reaching the state $i$ during the time interval $[t,t+dt)$, i.e.,
\begin{align*}
J_i^+(t)dt& =P((X_{t+dt}=i) \cap (X_t\neq i))
\end{align*}
and let $J_i^-(t)dt$ be the probability of leaving the state $i$ during the time interval $[t,t+dt)$, i.e.,
\begin{align*}
J_i^-(t)dt& = P((X_{t+dt}\neq i) \cap (X_t= i)).
\end{align*}
Then we have under $P:=P \l \cdot \mid X(0)=l, \gamma (0) = 0 \r$
\begin{align}
&P(\{ X_{t+dt}=i\}) \notag \\ &= P(\{ X_{t+dt}=i\} \cap \{X_t=i\}) + P(\{ X_{t+dt}=i\} \cap \{X_t\neq i\})\notag \\
&= P(\{X_t=i\})-P(\{X_{t+dt } \neq i \} \cap \{X_t=i\})+P(\{ X_{t+dt}=i\} \cap \{X_t\neq i\})
\label{forw}
\end{align}
which, in our notations, reads
\begin{align*}
p_{l,i}(t+dt)= p_{l,i}(t)-J_i^-(t)dt+J_i^+(t)dt
\end{align*}
or, equivalently,
\begin{align}
\frac{d}{dt}p_{l,i}(t)=  J^+_i(t)-J^-_i(t), \label{A} \qquad t\geq 0.
\end{align}
By the total probability law, the ingoing flux can be computed as
\begin{align*}
J^+_i(t)= \sum _{k\neq i} J^-_k(t)h_{k,i}
\end{align*}
where $h_{k,i}$ is the matrix of the embedded chain. Then we obtain the following balance equation (expressing the conservation of probability mass)
\begin{align}
\frac{d}{dt}p_{l,i}(t)&=  \sum _{k\neq i} J^-_k(t)h_{k,i}-J^-_i(t) \notag 
\end{align}
which can also be written as
\begin{align}
&\frac{d}{dt}p_{l,i}(t)= \sum _k J^-_k(t) (h_{k,i}-\delta _{k,i}). \label{C}
\end{align}
The main goal is to compute the outgoing flux $J_i^-(t)$. It can be viewed as the sum of two contributions
\begin{align*}
J^-_i(t)dt= A_i^1(t)dt+A_i^2(t)dt
\end{align*}
where $A_i^1(t)dt$ is the probability to be initially in the state \textit{i} and to remain there for a time exactly equal to $t$ and 
$A_i^2(t)dt$ is the probability to reach the state \textit{i} at  time $t'<t$ and to remain there for a time $t-t'$. 
Thus
\begin{align*}
J^-_i(t)= f_i(t)p_{l,i}(0)+\int_0^t f_i(t-t')J^+_i(t')dt'
\end{align*}
where $p_{l,i}(0)= \delta _{l,i}$ and $f_i(t)$ the probability density of the holding time in $i$. We can eliminate $J^+_i(t)$ since by \eqref{A} we have $J^+_i(t)= \frac{d}{dt}p_{l,i}(t)+J^-_i(t)$. We thus obtain
\begin{align}
J^-_i(t)= f_i(t)p_{l,i}(0)+\int_0^t f_i(t-t')\bigl(\frac{d}{dt'}p_{l,i}(t')+J^-_i(t')    \bigr )dt' \label{B}
\end{align}
which is an integral equation in $J^-_i(t)$.
By applying the Laplace transform to (\ref{B}) we obtain
\begin{align*}
\tilde{J}^-_i(s)= \tilde{f}_i(s)p_{l,i}(0)+ \tilde{f}_i(s) \bigl (s\, \tilde{p}_{l,i}(s)-p_{l,i}(0)  + \tilde{J}^-_i(s)  \bigr)
\end{align*}
which gives
\begin{align}\label{E}
\tilde{J}^-_i(s)=\frac{s \tilde{f}_i(s)}{1-\tilde{f}_i(s)}\tilde{p}_{l,i}(s).
\end{align}
By assuming that the holding times follow a Mittag-Leffler distribution we have by \eqref{lapldensmittag} that
\begin{align*}
\tilde{f}_i(s)= \frac{\lambda_i}{\lambda_i+s^{\alpha_i}}
\end{align*}
and thus formula \eqref{E} becomes
\begin{align*}
\tilde{J}^-_i(s)= \lambda_is^{1-\alpha _i}\, \tilde{p}_{l,i}(s).
\end{align*}
By reminding the definition of Riemann-Liouville derivative \eqref{defriemann}, we have
\begin{align}
J^-_i(t)= \lambda_i \, ^R\mathcal{D}_t^{1-\alpha _i}p_{l,i}(t)
\label{416}
\end{align}
and \eqref{C}  reduces to
\begin{align}
\frac{d}{dt}p_{l,i}(t)&= \sum_k \lambda _k \, ^R\mathcal{D}_t ^{1-\alpha _k}p_{l,k}(t)(h_{k,i}-\delta _{k,i})\notag \\
&= \sum _k \, ^R\mathcal{D}_t^{1-\alpha _ k}p_{l,k}(t)g_{k,i}.
\end{align}
and the proof is complete.
\end{proof}
\begin{os}
We remark that the reason why such equation is called forward is that the operator on the right side acts on the ``forward" variable $i$ but leaves unchanged the backward variable $l$. Indeed, the equation is derived by conditioning on the event of the last jump (reaching the final state $i$) that may have occurred  in a narrow interval near $t$. This last aspect is particularly clear by looking at \eqref{forw}.
\end{os}

\begin{os}
If $X$ is a Markov process, then the sojourn times are exponentially distributed, i.e.
\begin{align*}
\tilde{f}_i(s)= \frac{\lambda_i}{\lambda_i+s}.
\end{align*}
Then (\ref{E}) reduces to
\begin{align*}
\tilde{J}^-_i(s)= \lambda _i \, \tilde{p}_{l,i}(s)
\end{align*}
namely
\begin{align}
J^-_i(t)= \lambda _i p_{l,i}(t).\label{z}
\end{align}
Then the balance equation (\ref{C}) reduces to the forward Kolmogorov equation
\begin{align*}
\frac{d}{dt}p_{l,i}(t)&= \sum _k J^-_k(t) (h_{k,i}-\delta _{k,i})\\
&= \sum _k \lambda _k(h_{k,i}-\delta _{k,i})p_{l,k}(t)\\
&= \sum _k p_{l,k}(t) g_{k,i}.
\end{align*}
From the physical point of view, the dynamics of the Markovian case and that of the CTRW with Mittag-Leffler waiting times present a wide difference.
Indeed, in the Markov case, the outgoing flux $J_i^-(t)$ from the state $i$ at time $t$ is proportional to the concentration of particles at the present time $t$ (see \eqref{z}). Instead, in the CTRW with power law waiting times,  the outgoing flux depends on the particles concentration at past times, according to a suitable weight kernel (see \eqref{416}). 
\end{os}

\begin{ex}[The state dependent fractional Poisson process (continued)]
To conclude the discussion of Example \ref{expoi1} we remark here that the forward equation for the state-dependent fractional Poisson process can be written down by using Theorem \ref{teforward}. We have indeed that the probabilities $p_{l,i}(t) := P \l \mathcal{N}(t) = i \mid \mathcal{N}(0) = l \r$ satisfy
\begin{align}
\frac{d}{dt}p_{l,i}(t) \, = \, \lambda \l \, ^R\mathcal{D}_t^{1-\alpha_{i-1}}  p_{l,i-1}(t) - \, ^R\mathcal{D}_t^{1-\alpha_{i}} p_{l,i}(t) \r.
\end{align}
\end{ex}

\section{Convergence to the variable order fractional diffusion}
\label{5}
Suppose that the state space $\mathcal{S}$ is embedded in $\mathbb{R}$. Hence our processes can be viewed as processes on $\mathbb{R}$, whose distribution is supported on $\mathcal{S}$. So in this section we consider a suitable scale limit of a semi-Markov process and we show that the one time distribution converges to the solution of forward heat equation on $\mathbb{R}$ with fractional variable order 
\begin{align}
\frac{\partial}{\partial t}p(x,y,t)= \frac{1}{2} \frac{\partial^2}{\partial y^2}\l \, ^R\mathcal{D}_t^{1-\alpha (y)} p(x,y,t)\r \label{G}.
\end{align} 
Such equation has been derived for the first time in \cite{Gorenflo} exactly in the study of anomalous diffusion in heterogeneous media. Hence our method provides a semi-Markov framework to this equation. The homogeneous case is represented by the time-fractional diffusion equation
\begin{align}
\mathcal{D}_t^{\alpha }p(x,y,t)= \frac{1}{2} \frac{\partial^2}{\partial x^2} p(x,y,t)
\label{classicheat}
\end{align}
which is well known in literature and already has a probabilistic interpretation (see Remark \ref{remctrw} below for some details). This equation is related with anomalous diffusion (non-Fickian diffusion), see, for example, \cite{hairer} for a recent application.

Let us assume that the process defined in \eqref{processo da studiare} is a 
symmetric CTRW with Mittag-Leffler waiting times and with transition probabilities
\begin{align}
h_{i,j} = \begin{cases} \frac{1}{2}, \qquad & j = i-1, i+1, \notag \\ 0, & j \neq i-1, i+1, \end{cases}
\end{align}
and $\lambda_i=\lambda$. Since $i,j$ are labels for points on the real line we can safely assume that the distance in $\mathbb{R}$ of two near points of $\mathcal{S}$ is constant and equal to $\epsilon$, i.e., $|i-j |=\epsilon$ for $j=i-1,i+1$ where $|\cdot|$ is the euclidean distance in $\mathbb{R}$. Hence looking at the process in $\mathbb{R}$ the walker performs jumps of size $\epsilon$. Then define
\begin{align}
\mathbb{R}^2 \ni (x,y) \mapsto  p(x,y, t) \, = \, \begin{cases} p_{x,y}(t), \qquad & (x, y) \in \mathcal{S} \times \mathcal{S}, \notag \\ 0, & \text{otherwise}.  \end{cases}
\end{align}
The forward equation \eqref{F} reduces to
\begin{align}
\frac{d}{dt}p_{l,i}(t)=
\frac{1}{2}\lambda \left[ \, ^R \mathcal{D}_t^{1-\alpha _{i-1}}  p_{l,i-1}(t)+ \, ^R \mathcal{D}_t^{1-\alpha _{i+1}} p_{l,i+1}(t)-2 \,^R \mathcal{D}_t^{1-\alpha _{i}} p_{l,i}(t)\right].
\label{forwdiff}
\end{align}
By considering now the auxiliary function
\begin{align}
u(x,y,t)= \, ^ R \mathcal{D}_t^{1-\alpha (y)}  p(x,y,t)
\end{align}
we can rewrite \eqref{forwdiff} as
\begin{align}
\frac{\partial}{\partial t}p(x,y,t)=\frac{1}{2}\lambda \l u(x,y-\epsilon,t)+u(x,y+\epsilon,t)-2u(x,y,t) \r.
\end{align}
By setting $\lambda = 1/\epsilon^2$ and letting $\epsilon \to 0$, the second derivative $\frac{\partial^2}{\partial y^2}u(x,y,t)$ arises. We thus obtain
\begin{align}
\frac{\partial}{\partial t}p(x,y,t)= \frac{1}{2} \frac{\partial^2}{\partial y^2}\l \, ^R\mathcal{D}_t^{1-\alpha (y)} p(x,y,t) \r.
\end{align} 
Note that the same scaling limit of a symmetic CTRW on a $d$-dimensional lattice leads to an analogous equation exibiting  the Laplace operator in place of the second order derivative.

Equation \eqref{G} can be obtained phenomenologically by combining the continuity equation
\begin{align}
\frac{\partial }{\partial t } p(x,y,t)= - \frac{\partial }{\partial y} q(x,y,t)
\end{align}
with an ad-hoc fractional Fick’s law regarding the flux $q(x,y,t)$:
\begin{align}
q(x,y,t)=-  \,  \frac{\partial}{\partial y} \, ^R\mathcal{D}_t^{1-\alpha (y)} p(x,y,t).
\end{align}
The fractional derivative in this expression provides a weighted average of the density gradient over the prior history, provided that the kernel of the average depends on the position $y$.

In terms of probability theory, the picture is completed by the backward heat equation with fractional variable order
\begin{align}
\mathcal{D}_t^{\alpha (x)}  p(x,y,t)= \frac{1}{2} \frac{d^2}{dx^2}p(x,y,t). \label{H}
\end{align}
Such equation has been derived in \cite{Orsingher2}, where the authors studied the convergence of the resolvent of semi-Markov evolution operators.
Heuristically, eq. $\eqref{H}$ can be obtained from the backward equation \eqref{aa} adapted to the case of a symmetric random walk with Mittag-Leffler waiting times:
\begin{align}
\mathcal{D}_t^{\alpha _i} p_{i,j}(t)= \frac{1}{2}\lambda \l p_{i+1,j}(t)+p_{i-1,j}(t)-2p_{i,j}(t)\r.
\end{align}
Indeed, passing to a lattice of size $\epsilon$ we have
\begin{align}
\mathcal{D}_t^{\alpha (x)}p(x,y,t) = \frac{1}{2}\lambda \bigl ( p(x+\epsilon,y,t)+p(x-\epsilon,y,t)-2p(x,y,t)
\bigr ).\end{align}
By assuming $\lambda= 1/ \epsilon ^2$, the limit $\epsilon \to 0$ gives the desired result.

\begin{os}
In our derivation of the fractional heat equations, the diffusion coefficient is put equal to 1.
However, the equations reported in the literature (e.g. \cite{Fetodov, Gorenflo}) usually exhibit a space-dependent diffusion coefficient.
To obtain this from a CTRW scheme, it is sufficient to assume a space dependent intensity $\lambda$ such that, in the limit of small $\epsilon$, it is of order  
$1/\epsilon ^2$, that is $\lambda (x)= \widetilde{\lambda}(x)/ \epsilon ^2$. Then, it is natural to define the diffusion coefficient as
\begin{align}
k(x)= \widetilde{\lambda}(x)
\end{align}
and to repeat the same scaling limit argument in order to obtain both the forward equation
\begin{align}
\frac{\partial}{\partial t}p(x,y,t) \, = \, \frac{1}{2} \frac{\partial^2}{\partial y^2}\l k(y) ^R\mathcal{D}_t^{1-\alpha (y)}  p(x,y,t)\r
\end{align}
and the backward equation
\begin{align}
\mathcal{D}_t^{\alpha (x)}p(x,y,t)= \frac{1}{2} k(x) \frac{\partial ^2}{\partial x^2}   p(x,y,t).
\end{align}
\end{os}

\begin{os}
The mean square displacement of a subdiffusion in a homogeneous medium  grows slower with respect to the Brownian motion, i.e., $ \overline{x^2}(t) \sim t^{{\alpha}}$, where $\alpha \in (0,1)$. 
Such a  process can be represented as a Brownian motion delayed by an independent inverse stable subordinator.
For a study of its long time asymptotic properties, consult \cite{Shilling}. Concerning subdiffusion in heterogeneous media described by our equation, the picture is much more complicated and some unexpected phenomena arise. For example, in \cite{Fetodov}, the authors find that in the long time limit the  CTRW process is localized at the lattice point where $\alpha (x)$ 
has its minimum, a phenomenon called ``anomalous aggregation". This suggests that the process does not enjoy the same ergodic properties reported in \cite{Shilling} for subdiffusion in homogeneous media.
\end{os}

\begin{os}
The transition probability $p(x,y,t)$ is the fundamental solution to the partial differential equation (\ref{H}), and thus it is interesting to consider some well-posedness issues.  The most recent result in this direction can be found in \cite{Kian}
where the authors consider the following Cauchy problem
\begin{align}
\begin{cases}
\rho(x) \mathcal{D}_t^{\alpha (x)}u(x,t)- \Delta u(x,t)= f(x,t) \qquad   x\in \Omega , t \in (0,T)\\
u(x,0)=u_0(x)\\
u(x,t)= 0 \qquad x \in \partial \Omega , t\in (0,T)
\end{cases}
\end{align}
and prove that, under suitable assumptions on the source term $f$ and the initial datum $u_0$,  there exists a unique weak solution $u(x,t)$ in the sense of \cite[Thm 2.3]{Kian}.
\end{os}

\begin{os}
\label{remctrw}
There are several results on CTRWs limit processes which can be applied in this situation by making some further assumptions \cite{marcincoupled, meertri, Meerschaert2, meerstra, strakahenry}. We discuss here an example. Consider the couple process $\l X_n, T_n \r$ and introduce a scale parameter $c$, so the process is $\l X_n^c, T_n^c \r$. By making assumptions on the weak convergence of probability measures of the process $\l X_{[u/c]}^c, T_{[u/c]}^c \r \to \l A(t), D(t) \r$ as $c \to 0$ (e.g. as in \cite[Theorem 3.6]{strakahenry}) one has that
\begin{align}
X^c(t) \, \to \, X^0 (t):=  A(E(t-)) 
\end{align}
where $E(t)$ is the hitting-time of $D$. In our situation the processes are non independent and hence ``coupled" in the language CTRW. 
Of course when $A$ is a Brownian motion and $D(t)$ is an independent $\alpha$-stable subordinator we are in the equivalent homogeneous situation of this section: the one time distribution of $A(E(t))$ solves indeed eq \eqref{classicheat} in which $\alpha$ is constant \cite{fracCauchy}. Here we can conjecture that in order to obtain a process governed by \eqref{G} we must assume that $A$ is still a Brownian motion and that $D(t)$ is a multistable subordinator $\sigma(t)$ obtained as a limit case of the piecewise stable subordinator of Definition \eqref{defpiec}. So we argue that $D(t)$ must be a multistable subordinator (dependent on $A(t)$) whose L\'evy measure is the limit of the L\'evy measure of a piecewise stable subordinator, i.e., conditionally on a Brownian path $A(t, \omega)$
\begin{align}
d\nu(ds, t)/ds \, = \, \int_\mathbb{R} \frac{\alpha(x)s^{-\alpha(x)-1}}{\Gamma(1-\alpha(x))} \, \mathds{1}_{\ll A(t, \omega) = x \rr} dx.
\label{multilim}
\end{align}
Then one can define $E(t)$ as the hitting-time of $\sigma(t)$. This require several further investigations.
\end{os}

\section{Arbitrary holding times and integro-differential Volterra equations}
\label{6}
The construction of Theorem \ref{tetimechfrac} is based on the notion of multistable subordinator. In \cite{Orsingher1} the authors introduced the more general class of inhomogeneous subordinators, i.e., non decreasing processes with independent and non stationary increments. By using these, it is possible to define a new type of CTRWs, which is constructed in the same way as \eqref{processo da studiare}, except for the distributions of the waiting times, which are no more Mittag-Leffler. 

Indeed, for any $i \in \mathcal{S}$, consider a L\'evy measure $\nu (dx, i)$ which defines a homogeneous subordinator $\sigma ^i$ such that
\begin{align*}
\mathbb{E} e^{-s\sigma ^i (t)}= e^{-t f(s,i)}
\end{align*}
where 
\begin{align}
f(s,i) \, = \, \int_0^\infty \l 1-e^{-sw} \r \nu(dw,i)
\end{align}
is the Laplace exponent of $\sigma ^i$.
Let $L^i(t) = \inf \ll \tau : \sigma^i (\tau)>t \rr$ be the right continuous hitting time of $\sigma ^i$. For any $i \in \mathcal{S}$ we assume $\nu ((0, \infty), i)= \infty$, in such a way that $\sigma ^i$ is a.s. stricly increasing, $L^i$ has a.e. continuous sample paths, and, for any $t>0$, $\sigma ^i(t)$ and $L^i(t)$ are absolutely continuous random variables. We are now ready to define the following CTRW:
\begin{align}
X(t)= X_n \qquad        T_n \leq t< T_{n+1} ,\label{ultimo processo}
\end{align}
where $T_n= \sum _{k=0}^{n-1}J_k$, $T_0=0$, and
\begin{align}
P \l J_n>t \mid X_n=i \r \, = \, \overline{F}_i(t)= \mathbb{E}e^{-\lambda_i L^i(t)}.
\label{63}
\end{align}
The generalization of Theorem \eqref{tetimechfrac} is immediate. Let  $M$ be a Markov process defined as in \eqref{definizione processo markov}. Moreover, let $\sigma^M(t)$ be an inhomogeneous subordinator dependent on $M$ {whose L\'evy measure, conditionally on $V_1 =v_1,V_2=v_2, \cdots$ and $X_1=x_1, X_2=x_2, \cdots$ is given by
\begin{align} 
\nu ^M(dx,t) = \nu (dx,i),  \qquad     v_{i} \leq t< v_{i+1}.
\end{align}
Let $L^M(t)$ be the right continuous inverse of $\sigma^M(t)$.
Then the time-changed process $M(L^M(t))$ is the same process as \eqref{ultimo processo}. To prove this, the key point is the fact that
\begin{align}
\int_0^\infty e^{-s t} P \l J_n > t \mid X_n = i \r \, = \, \frac{f(s, i)}{s} \frac{1}{\lambda_i+f(s,i)},
\end{align}
namely, conditionally to $X_n=i$, $J_n$ has a density $\psi _i$ with Laplace transform
\begin{align}
 \int_0^\infty e^{-s \tau} \psi _i(\tau) d\tau\, = \, \frac{\lambda_i}{\lambda_i + f(s,i)},
 \label{322}
\end{align}
which is coinciding with 
$\mathds{E}(e^{-s \sigma ^i (E_n)}|X_n=i) $.

\subsection{ Integro-differential Volterra equations with position dependent kernel}
To obtain a backward equation, we resort again to \eqref{Markov renewal equation} and applying Laplace transform to both sides  yields
\begin{align} 
\widetilde{p}_{i,j}(s)= \frac{f(s,i) }{s(\lambda _i+f(s,i) )}\delta _{i,j} +\sum _l h_{i,l}\, \widetilde{p}_{l,j}(s)\frac{\lambda _i}{\lambda _i+ f(s,i)},
\end{align}
which can be rearranged as
\begin{align}
f(s,i)\widetilde{p}_{i,j}(s) -s^{-1}f(s,i) \delta_{i,j} \, = \, \sum _k g_{i,k}\widetilde{p}_{k,j}(s)
\label{laplgenerale}
\end{align}
where again $g_{i,j}=\lambda _i (h_{i,j}-\delta _{i,j})$. Inverting Laplace transform in \eqref{laplgenerale} however does not yield to a time-fractional equation. By using indeed \cite[Lemma 2.5 and Proposition 2.7]{toaldo} we get the inverse Laplace transform
\begin{align}
\frac{d}{dt} \int_0^t p_{i,j}(t^\prime) \, \bar{\nu}(t-t^\prime,i) \, dt^\prime \, - \delta_{i,j} \bar{\nu}(t,i) \,  = \, \sum_k g_{i,k}p_{k,j}(t)
\label{oltgen}
\end{align}
where $\bar{\nu}(t,i):=\nu((t, \infty),i)$, provided that the integral function is differentiable. It is clear that in the situation of Theorem \ref{tetimechfrac} one has
\begin{align}
\bar{\nu}(t,i) \, = \, \int_t^\infty \frac{\alpha_i w^{-\alpha_i-1}}{\Gamma (1-\alpha_i)} dw \, = \, \frac{t^{-\alpha_i}}{\Gamma(1-\alpha_i)}
\end{align}
and the operator on the left-hand side of \eqref{oltgen} becomes a fractional Caputo derivative.

We now also derive a forward equation. Let 
\begin{align}
N^*(t)= \max \{n: T_n\leq t \}
\end{align}
be the number of renewals for the process \eqref{ultimo processo} up to time $t$. Of course, conditionally to $X_1=x_1$, $X_2=x_2, \cdots,$ we have that $N^*$ is a birth process (with rates $\lambda _{x_1}$, $\lambda _{x_2}, \cdots$) time changed by the dependent time process $L^{M}$. 
Our attention focuses on the quantity, conditionally on $\ll X(t)=i \rr$,
\begin{align}
\lim _{\Delta t \to 0} \frac{\mathbb{E} [N^*(t+\Delta t) ]- \mathbb{E}[N^*(t)]}{\Delta t} \label{renewal density}
\end{align}
which we call renewal density (in the spirit of \cite[page 26]{Cox}) and specifies the mean number of renewals to be expected in a narrow interval near $t$ conditionally on the current position.
Since we condition on $X(t)=i$, \eqref{renewal density} is obviously depending on $i$ and $N^*(t+dt)-N^*(t)$ behaves like $N(L^i(t+dt))-N(L^i(t))$ where $N$ is a standard birth process.
Thus the limit \eqref{renewal density} can be computed as
\begin{align}
m_i(t)= \, & \, \frac{d}{dt}\mathbb{E}\left[N^*(t)\right]= \frac{d}{dt}\mathbb{E}N(L^i(t))= \frac{d}{dt}\lambda _i \mathbb{E}L^i(t)\notag \\
= \, & \lambda _i \frac{d}{dt} \int _0^\infty P(L^i(t)>w)dw\, = \,  \lambda _i  \frac{d}{dt}\int _0^\infty P(\sigma ^i (w)<t)dw.
\label{renfun}
\end{align}
The function
\begin{align}
t \mapsto u^i(t):=\frac{d}{dt}\int _0^\infty P(\sigma ^i (w)<t)dw
\label{rendens}
\end{align}
on the right-hand side of \eqref{renfun} is said to be, in the language of potential theory (e.g. \cite{pottheory}), the potential density of the subordinator $\sigma^i$ and is such that (e.g. \cite[Section 1.3]{bertoins})
\begin{align}
\int_0^\infty e^{-st} u^i(t) dt\, = \, \frac{1}{ f(s, i)}
\end{align}
provided that the derivative in \eqref{rendens} exists a.e. Heuristically $u^i(t)$ represents the mean of the total amount of time spent by the subordinator $\sigma ^i$ in the state $dt$.

To obtain a forward equation we can follow the same line of section \ref{4} up to formula \eqref{E}. Then,  by using \eqref{322}, the outgoing flux has Laplace transform
\begin{align*}
\tilde{J}_i^-(s)= \lambda _i \frac{s}{f(s,i)}\tilde{p}_{l,i}(s)
\end{align*}
and thus the convolution theorem gives
\begin{align}
J^-_i(t)&=  \frac{d}{dt} \int_0^t p_{l,i}(\tau) \, m_i(t-\tau) d\tau   \\& =  \lambda_i \frac{d}{dt} \int_0^t p_{l,i}(\tau) \, u_i(t-\tau) d\tau.
\label{zz}
\end{align}
Finally, \eqref{C}  reduces to
\begin{align} \label{forward finale}
\frac{d}{dt}p_{l,i}(t)= \sum _k g_{k,i} \frac{d}{dt} \int_0^t p_{l,k}(s) \, u_k(t-s) ds ,
\end{align}
which is the forward equation for our process. It is straightforward to prove that in the fractional case the renewal density relatively to the state $k$ reads
\begin{align*}
m_k(t)=\lambda_k \frac{t^{\alpha_k-1}}{\Gamma (\alpha_k)}
\end{align*}
and the operator on the right-hand side of \eqref{forward finale} reduces to the Riemann Liouville derivative $^R\mathcal{D}^{1-\alpha_k}$.
With the above discussion we have proved the following result.
\begin{te}
Let $X(t)$ be a process like \eqref{ultimo processo} with holding times $\overline{F}_i(t)$ given by \eqref{63}. Further assume that $m_i(t)$ exists for any $i$. Then the probabilities $p_{i,j}(t)$ satisfy the backward equation \eqref{oltgen} as well as the forward equation \eqref{forward finale}.
\end{te}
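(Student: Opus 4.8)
The plan is to assemble the two equations from the computations already laid out in this section, since the statement is essentially a summary of them; I would organize the argument into a backward part and a forward part, each passing through Laplace space and then inverting.

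For the backward equation \eqref{oltgen}, I would start from the Markov renewal equation \eqref{Markov renewal equation}, which holds verbatim for the process \eqref{ultimo processo} because its semi-Markov structure (the embedded chain $X_n$ together with conditionally independent holding times) is identical to that of \eqref{processo principale}; only the holding-time law changes. Taking Laplace transforms and inserting the explicit symbol from \eqref{322}, namely $\widetilde{\psi}_i(s)=\lambda_i/(\lambda_i+f(s,i))$, yields \eqref{laplgenerale} after the algebraic rearrangement with $g_{i,j}=\lambda_i(h_{i,j}-\delta_{i,j})$. The remaining step is to recognize the left-hand side of \eqref{laplgenerale} as the Laplace transform of the Volterra operator appearing in \eqref{oltgen}; this is where I would invoke \cite[Lemma 2.5 and Proposition 2.7]{toaldo}, using the identity $f(s,i)=s\int_0^\infty e^{-sw}\bar\nu(w,i)\,dw$ (integration by parts in the Laplace exponent) so that $f(s,i)\widetilde p_{i,j}(s)$ inverts to $\frac{d}{dt}\int_0^t p_{i,j}(t')\bar\nu(t-t',i)\,dt'$ and $s^{-1}f(s,i)\delta_{i,j}$ inverts to $\delta_{i,j}\bar\nu(t,i)$.

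For the forward equation \eqref{forward finale}, I would reproduce the flux-balance argument of Section \ref{4} up to \eqref{E}, which uses only the conditioning on the first renewal and is insensitive to the particular holding-time law; the justification that at most one jump occurs in an infinitesimal interval, with probability $O(dt)$, transfers directly from Remark \ref{remrendens} through the time-change representation and the strict monotonicity and continuity of the inhomogeneous subordinator $\sigma^M$. Substituting \eqref{322} into \eqref{E} gives the outgoing flux $\widetilde J_i^-(s)=\lambda_i s\,\widetilde p_{l,i}(s)/f(s,i)$, and recognizing $1/f(s,i)$ as the Laplace transform of the potential density $u^i$ (hence $\lambda_i/f(s,i)$ as that of the renewal density $m_i$) allows me to invert by the convolution theorem, obtaining \eqref{zz}. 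Feeding this into the balance equation \eqref{C} and using $g_{k,i}=\lambda_k(h_{k,i}-\delta_{k,i})$ together with $m_k=\lambda_k u_k$ produces \eqref{forward finale}.

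The main obstacle I anticipate is not the formal Laplace algebra but the analytic justification of the two inversions, and this is exactly where the hypotheses of the theorem enter. For the backward direction one must ensure that $t\mapsto\int_0^t p_{i,j}(t')\bar\nu(t-t',i)\,dt'$ is genuinely differentiable, which is the standing proviso attached to \eqref{oltgen}; for the forward direction one needs the potential density $u^i$ to exist (supplied by the absolute-continuity assumptions on $\sigma^i$ and $L^i$) and the renewal density $m_i(t)$ to exist, which is precisely the assumption imposed in the statement. I would therefore devote the most care to verifying that the cited results of \cite{toaldo} apply under these hypotheses and that the differentiation-of-a-convolution step carrying $\widetilde J_i^-(s)$ to \eqref{zz} is legitimate, the convolution vanishing at $t=0$ so that no boundary term survives.
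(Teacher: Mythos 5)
Your proposal is correct and follows essentially the same route as the paper: the backward equation is obtained by Laplace-transforming the Markov renewal equation with the symbol $\widetilde\psi_i(s)=\lambda_i/(\lambda_i+f(s,i))$ and inverting via \cite[Lemma 2.5 and Proposition 2.7]{toaldo}, while the forward equation comes from the flux-balance argument of Section \ref{4} up to \eqref{E}, the identification of $\lambda_i/f(s,i)$ with the Laplace transform of the renewal density $m_i=\lambda_i u_i$, and the convolution theorem. Your remarks on where the differentiability and existence hypotheses enter match the provisos the paper attaches to \eqref{oltgen} and to the statement itself.
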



\begin{thebibliography}{99}


\bibitem {applebaum} D. Applebaum. L\'evy Processes and stochastic calculus. Second Edition. \textit{Cambridge University Press}, New York, 2009.

\bibitem{fracCauchy} B. Baeumer and M.M. Meerschaert. Stochastic solutions for fractional Cauchy problems.  {\it Fractional Calculus and Applied Analysis} {4}: 481--500, 2001.

\bibitem {Beghin2} L. Beghin and E. Orsingher. Fractional Poisson processes and related planar motions, \emph{Electr. J. Prob.}, 2009, 14, 1790--1827.

\bibitem[Beghin (2010) ]{Beghin} L. Beghin and E. Orsingher.
Poisson-type processes governed by fractional and higher-order recursive differential equations. \emph {Electr. J. Prob.}, 22: 684 --709, 2010.

\bibitem[Beghin (2016) ]{Beghin3} L. Beghin and C. Ricciuti.
Time-inhomogeneous fractional Poisson processes defined by the multistable subordinator, \emph {arXiv:1608.02224}.

\bibitem [Bertoin(1997)]{bertoins}
J. Bertoin.
\newblock {Subordinators: examples and appications}.
\newblock \emph{Lectures on probability theory and statistics (Saint-Flour, 1997)}, 1 -- 91. \emph{Lectures Notes in Math.}, 1717, Springer, Berlin, 1999.




\bibitem[Bogdan et al.(2009)]{pottheory}
K. Bogdan, T. Byczkowski, T.  Kulczycki, M. Ryznar, R. Song and Z. Vondra\v{c}ek.
\newblock {Potential Analysis of Stable Processes and its Extensions}.
\newblock \emph{P. Graczyk, A. Stos, editors, Lecture Notes in Mathematics 1980}, 87--176, 2009.


\bibitem[Gorenflo (2015)]{Gorenflo}  A.V. Chechkin, R Gorenflo and I. M. Sokolov. Fractional diffusion in inhomogeneous media, \emph{J. Phys. A}: Math. Gen. 38: 679 -- 684, 2005.


\bibitem[Chen (2017)]{Zhen Qing Chen} Z.-Q. Chen. Time fractional equations and probabilistic representation. \emph{Chaos, Solitons and Fractals}, in press.

\bibitem[Cox (1962)]{Cox} D. R. Cox.
Renewal theory, \emph{John Wiley and Sons, New York}, 1962.



\bibitem[D'Ovidio et al.(2014)]{dovetalip}
M. D'Ovidio, E. Orsingher and B. Toaldo.
\newblock{Fractional telegraph-type equations and hyperbolic Brownian motion.}
\newblock{\emph{Statistics and Probability Letters}}, 89(1): 131 -- 137, 2014.




\bibitem[D'Ovidio et al.(2014)]{dovetal}
M. D'Ovidio, E. Orsingher and B. Toaldo.
\newblock{Time-changed processes governed by space-time fractional telegraph equations.}
\newblock{\emph{Stochastic Analysis and Applications}}, 32(6): 1009 - 1045, 2014.



\bibitem{fellersemi}
W. Feller.
\newblock On semi-Markov processes.
\emph{Proceedings of the National Academy of Sciences of the United States of America}, 51(4): 653 -- 659, 1964.

\bibitem[Fetodov (2012)]{Fetodov} S. Fetodov and S. Falconer. Subdiffusive master equation with space-dependent anomalous exponent and structural instability, \emph{Physical Review E} 85, 031132, 2012.


\bibitem[Garra (2015)]{Garra} R. Garra, E. Orsingher and F. Polito. State-dependent fractional point processes. \textit{J. Appl. Probab.}. 52: 18 -- 36, 2015.

\bibitem[Georgiu (2015)]{Scalas} N. Georgiou. I. Z. Kiss, E. Scalas. Solvable non-Markovian dynamic network, Phys. Rev. E 92, 042801, 2015.
 
\bibitem[Kian (2017)]{Kian}
Y. Kian, E. Soccorsi and M. Yamamoto. A uniqueness result for time-fractional diffusion equations with space-dependent variable order. \textit{arXiv:1701.04046}.

\bibitem[Kolokoltsov (2011)]{Kolokoltsov} V.N. Kolokoltsov. \emph{Markov processes, semigroups and generators}.
De Gruyter Studies in Mathematics, 38. Walter de Gruyter, Berlin, 2011.

\bibitem[Korolyuk and Swishchuk(1995)]{koro}
V. Korolyuk and A. Swishchuk.
\newblock{Semi-Markov random evolutions}.
\newblock{\emph{Springer-Science + Business Media, B.V.}}, 1995.

\bibitem[Gihman and Skorohod(1975)]{gihman}
I.I. Gihman and A.V. Skorohod.
\newblock The theory of stochastic processes II.
\emph{Springer-Verlag}, 1975.



\bibitem{hairer} 
M. Hairer, G. Iyer, L. Koralov, A. Novikov, and Z. Pajor-Gyulai.
A fractional kinetic process describing the intermediate time behaviour of cellular flows. \emph{The Annals of Probability}, to appear (available at arXiv:1607.01859).

\bibitem[Laskin (2003)]{LaskinS}
N. Laskin. Fractional Poisson process. \emph{Commun. Nonlinear Sci. Numer. Simul.}, 8: 201 -- ̋213, 2003.

\bibitem[Le Gu\'evel et al.(2013)]{legueve2} R. Le Gu\'evel and J. L\'evy
V\'ehel, L. Liu. On two multistable extensions of stable L\'evy
motion and their semi-martingale representations. \emph{J. Theoret. Probab.}, 28(3): 1125 -- 1144, 2013.


\bibitem[Mainardi (2004)]{MainardiS}
 F. Mainardi, R. Gorenflo and E. Scalas. A fractional generalization of the Poisson processes. \emph{Vietnam J. Math.} 32: 53 -- 64, 2004.





\bibitem[Magdziarz and Schilling(2013)]{Shilling}
M. Magdziarz and R. Schilling.
\newblock{Asymptotic properties of Brownian motion delayed by inverse subordinators.}
\newblock{\emph{Proc. Amer. Math. Soc}}, 143: 4485 -- 4501, 2015. 



\bibitem{marcincoupled}
M. Magdziarz and T. Zorawik. Densities of scaling limits of coupled continuous time random walks. \emph{Fract. Calc. Appl. Anal.}, 19: 1488 -- 1506, 2016.


\bibitem{Mainardi} F. Mainardi, R. Gorenflo and E. Scalas. A fractional generalization of the Poisson processes. \emph{Vietnam J. Math.}, 32: 53 -- 64, 2004.


\bibitem [Meerschaert and Scheffler(2008)]{meertri}
M.M. Meerschaert and H.P. Scheffler.
\newblock {Triangular array limits for continuous time random walks}.
\newblock \emph{Stochastic Processes and their Applications}, 118(9): 1606 -- 1633, 2008.

\bibitem[Meerschaert (2011)]{Meerschaert1} M.M. Meerschaert, E. Nane and
P. Vellaisamy. The fractional Poisson process and the inverse stable subordinator. \emph{Elect. J. Prob.}, 16(59): 1600--1620, 2011.

\bibitem [Meerschaert et al.(2011)] {meerbounded} M.M. Meerschaert, E. Nane and P. Vellaisamy.
\newblock {Fractional Cauchy problems on bounded domains}.
\newblock \emph{The Annals of Probability}, 37(3): 979 -- 1007, 2009.

\bibitem[Meerschaert (2004)]{Meerschaert2} M.M. Meerschaert and H. Scheffler, Limit theorems for continuous time random walks with infinite mean waiting times, \emph {J. Appl. Prob.}, 41(3): 623 -- 638, 2004.

\bibitem[Meerschaert and Straka(2014)]{meerstra}
M.M. Meerschaert and P. Straka.
\newblock {Semi-Markov approach to continuous time random walk limit processes}.
\newblock \emph{The Annals of Probability}, 42(4) : 1699 -- 1723, 2014.

\bibitem[Meerschaert (2016)]{Meerschaert3} M.M. Meerschaert and B. Toaldo, 
Relaxation patterns and semi-Markov dynamics. \textit{arXiv:1506.02951}.


\bibitem[ Metzler (2004)]{Metzler}  R. Metzler and J. Klafter. The random walk's guide to anomalous diffusion: a fractional dynamics approach. \emph {Physics Reports}, 339: 1 -- 77, 2000.

\bibitem[Molchanov and Ralchenko(2014)]{molcha} I. Molchanov and K.
Ralchenko. Multifractional Poisson process, multistable
subordinator and related limit theorems. \emph{Stat. Prob. Lett.}, 96: 95 -- 101, 2014.

\bibitem [Norris(1998)]{norris}
J.R. Norris.
\newblock {Markov Chains}.
\newblock \emph{Cambridge University Press}, 1998.

\bibitem[Orsingher et al (2013)]{Orsingher1} E. Orsingher, C. Ricciuti and B.
Toaldo, Time-inhomogeneous jump processes and variable order
operators, \emph{Potential Analysis}, 45(3): 435 - 461, 2016.

\bibitem[Orsingher et al (2013)]{Orsingher2} E. Orsingher, C. Ricciuti and B.
Toaldo, On semi-Markov processes and their Kolmogorov's integro-differential equations, \emph{arXiv:1701.02905}.

\bibitem[Politi (2011)]{Politi} M. Politi, T. Kaizoji and E. Scalas.
 Full characterization of the fractional Poisson process, \emph{ Europhysics Letters}, 96(2): 1 -- 6, 2011.
 
\bibitem[Repin (2000)]{RepinS}
O.N. Repin and A.I. Saichev. Fractional Poisson law. \emph{Radiophys. and Quantum Electronics}, 43: 738 -- 741, 2000.

\bibitem[Sato (1999)]{Sato} K. Sato, \emph{L\'evy Processes and
Infinitely Divisible Distributions}, Cambridge University press, 1999.

\bibitem{Scalas2006Lecture}
E.~Scalas. {Five years of continuous-time random walks in econophysics}.
  The Complex Networks of Economic Interactions, \emph{Lecture Notes in Economics and
  Mathematical Systems},  567: 3--16, Springer, Berlin, 2006.
  
  

\bibitem{strakahenry} P. Straka and B.I. Henry. Lagging and leading coupled continuous time random walks, renewal times and their joint limit.s\emph{Stochastic Process. Appl.}, 121: 324 -- 336, 2011.


\bibitem[Toaldo (2014)]{toaldo} B. Toaldo. Convolution-type derivatives, hitting-times of subordinators and time-changed $C_0$-semigroups. \emph{Potential Analysis}, 42(1): 115 -- 140, 2015.

\bibitem[Toaldo(2014)]{toaldodo}
B. Toaldo.
\newblock {L\'evy mixing related to distributed order calculus, subordinators and slow diffusions.}
\newblock \emph{Journal of Mathematical Analysis and Applications}, 430(2): 1009 -- 1036, 2015.

\end{thebibliography}
\end{document}